\newtheorem{theorem}{Theorem}[section]
\newtheorem{corollary}[theorem] {Corollary}
\newtheorem{definition}[theorem]{Definition}
\newtheorem{lemma} [theorem]{Lemma}
\newtheorem{proposition}[theorem]{Proposition}
\newtheorem{remark}[theorem]{Remark}
\title{\bf Weak Integer Additive Set-Indexers of Certain Graph Operations}
\author{{\bf N K Sudev \footnote{Department of Mathematics, Vidya Academy of Science \& Technology, Thalakkottukara, Thrissur-680501,Kerala, India. email: {\em sudevnk@gmail.com}, Phone:{+919497557876.}}     and    {\bf K A Germina\footnote{Department of Mathematics, School of Mathematical \& Physical Sciences, Central University of Kerala, Kasaragod-671531, Kerala, India email:{\em srgerminaka@gmail.com}, Phone:{+919744859390.}}}}}
\date{}
\begin{document}
\maketitle

\begin{abstract}
An integer additive set-indexer is defined as an injective function $f:V(G)\rightarrow 2^{\mathbb{N}_0}$ such that the induced function $g_f:E(G) \rightarrow 2^{\mathbb{N}_0}$ defined by $g_f (uv) = f(u)+ f(v)$ is also injective, where $f(u)+f(v)$ is the sum set of $f(u)$ and $f(v)$ and $\mathbb{N}_0$ is the set of all non-negative integers. If $g_f(uv)=k~\forall~uv\in E(G)$, then $f$ is said to be a $k$-uniform integer additive set-indexers. An integer additive set-indexer $f$ is said to be a weak integer additive set-indexer if $|g_f(uv)|=max(|f(u)|,|f(v)|)~\forall ~ uv\in E(G)$. We have some  characteristics of the graphs which admit weak integer additive set-indexers. In this paper, we study the admissibility of weak integer additive set-indexer by certain finite graph operations.
\end{abstract}
\textbf{Key words}: Integer additive set-indexers, weak integer additive set-indexers, mono-indexed elements of a graph, sparing number of a graph.\\
\textbf{AMS Subject Classification: 05C78} 

\section{Introduction}

For all  terms and definitions, not defined specifically in this paper, we refer to \cite{FH}, \cite{BM1}, and \cite{ND}. Unless mentioned otherwise, all graphs considered here are simple, finite and have no isolated vertices.

An {\em integer additive set-indexer} (IASI, in short) is defined in \cite{GA} as an injective function $f:V(G)\rightarrow 2^{\mathbb{N}_0}$ such that the induced function $g_f:E(G) \rightarrow 2^{\mathbb{N}_0}$ defined by $g_f (uv) = f(u)+ f(v)$, where $f(u)+f(v)$ is the sumeset of $f(u)$ and $f(v)$, is also injective. If $g_f(e)=k~\forall~ e\in E(G)$, then $f$ is called a $k$-uniform IASI.

The cardinality of the labeling set of an element (vertex or edge) of a graph $G$ is called the {\em set-indexing number} of that element.

The characteristics of weak IASI graphs have been done in \cite{GS1} and \cite{GS2}. The following are the major notions and results established in these papers.

\begin{lemma}\label{L-Card}
\cite{GS1} For an integer additive set-indexer $f$ of a graph $G$, we have $max(|f(u)|,|f(v)|)\le |g_f(uv)|= |f(u)+f(v)| \le |f(u)| |f(v)|$, where $u,v\in V(G)$.
\end{lemma}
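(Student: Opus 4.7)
The plan is to prove the two inequalities separately, treating them as standard sumset cardinality bounds, with $A=f(u)$ and $B=f(v)$ being finite subsets of $\mathbb{N}_0$.

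For the upper bound $|A+B|\le |A||B|$, I would observe that every element of $A+B$ has the form $a+b$ with $a\in A$, $b\in B$, and the map $(a,b)\mapsto a+b$ from $A\times B$ to $A+B$ is surjective by definition. Since $|A\times B|=|A||B|$, the cardinality of the image cannot exceed this, which gives the required bound. Equality can fail because distinct pairs can share the same sum, but no over-counting ever increases the size.

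For the lower bound $|A+B|\ge \max(|A|,|B|)$, I would argue by translation. Without loss of generality assume $|A|\ge |B|$, and fix any element $b_0\in B$ (which exists since $G$ has no isolated vertices, so both $f(u)$ and $f(v)$ are nonempty). The translated set $A+b_0=\{a+b_0 : a\in A\}$ is contained in $A+B$, and since the map $a\mapsto a+b_0$ is an injection on $\mathbb{Z}$, we have $|A+b_0|=|A|$. Hence $|A+B|\ge |A+b_0|=|A|=\max(|A|,|B|)$.

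Combining these two inequalities yields the full chain $\max(|f(u)|,|f(v)|)\le |f(u)+f(v)|\le |f(u)||f(v)|$. There is no real obstacle here; the only mild subtlety is justifying that the sets are nonempty (to pick the translating element $b_0$), which follows from $f$ being a well-defined function into $2^{\mathbb{N}_0}$ with $uv$ an actual edge, together with the standing assumption that $G$ has no isolated vertices.
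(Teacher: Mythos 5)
Your argument is correct and is the standard proof of this sumset inequality: the upper bound via the surjection $(a,b)\mapsto a+b$ from $f(u)\times f(v)$ onto $f(u)+f(v)$, and the lower bound via an injective translate $f(u)+b_0\subseteq f(u)+f(v)$. The paper itself states this lemma without proof, citing \cite{GS1}, so there is nothing to compare against beyond noting that yours is the expected argument. One small correction: the nonemptiness of $f(u)$ and $f(v)$ (which you rightly identify as the only point needing care, since $f(v)=\emptyset$ would break the lower bound) does not follow from $G$ having no isolated vertices; it follows from the standing convention in this literature that set-labels are nonempty subsets of $\mathbb{N}_0$.
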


\begin{definition}{\rm
\cite{GS1} An IASI $f$ is said to be a {\em weak IASI} if $|g_f(uv)|=max(|f(u)|,|f(v)|)$ for all $u,v\in V(G)$. A graph which admits a weak IASI may be called a {\em weak IASI graph}.}
\end{definition}

\begin{definition}{\rm
\cite{GS2} An element (a vertex or an edge) of graph which has the set-indexing number 1 is called a {\em mono-indexed element} of that graph. The {\em sparing number} of a graph $G$ is defined to be the minimum number of mono-indexed edges required for $G$ to admit a weak IASI and is denoted by $\varphi(G)$.}
\end{definition}

\begin{theorem}\label{T-WSG}
\cite{GS3} If a graph $G$ is a weak IASI graph, then any subgraph $H$ of $G$ is also a weak IASI graph. Or equivalently, if $G$ is a graph which has no weak IASI, then any supergraph of $G$ does not have a weak IASI.  
\end{theorem}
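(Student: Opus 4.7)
The plan is to prove the direct statement by restriction of the set-indexer: if $G$ admits a weak IASI $f$, then for any subgraph $H$ of $G$, the restriction $f\restriction_{V(H)}$ will itself be a weak IASI on $H$. The equivalent contrapositive form then follows immediately, so the only real work is verifying that the three defining conditions (injectivity of the vertex labeling, injectivity of the induced edge labeling, and the weak cardinality condition) all descend to subgraphs.

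First, I would let $f:V(G)\to 2^{\mathbb{N}_0}$ be a weak IASI of $G$ with induced function $g_f$, and define $f_H:V(H)\to 2^{\mathbb{N}_0}$ by $f_H(v)=f(v)$ for every $v\in V(H)$. Since $V(H)\subseteq V(G)$ and $f$ is injective on $V(G)$, the restriction $f_H$ is automatically injective. Next I would consider the induced edge function $g_{f_H}$ on $E(H)$; for any edge $uv\in E(H)\subseteq E(G)$ we have $g_{f_H}(uv)=f_H(u)+f_H(v)=f(u)+f(v)=g_f(uv)$, so injectivity of $g_{f_H}$ follows from injectivity of $g_f$ on the larger edge set $E(G)$.

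It remains to verify the weak condition $|g_{f_H}(uv)|=\max(|f_H(u)|,|f_H(v)|)$ for every edge $uv\in E(H)$. Since $uv\in E(G)$ as well, this equality is exactly the hypothesis that $f$ is a weak IASI of $G$, applied to the edge $uv$. Hence $f_H$ is a weak IASI of $H$, proving the first assertion. For the equivalent formulation, if some supergraph $G'\supseteq G$ admitted a weak IASI, then by what we just proved $G$ itself, being a subgraph of $G'$, would admit a weak IASI, contradicting the hypothesis.

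I do not anticipate any genuine obstacle here; the argument is essentially a bookkeeping exercise that exploits the local (edge-by-edge) nature of both the injectivity and the cardinality requirements in the definition of a weak IASI. The only point requiring a small amount of care is making explicit that the edge-level conditions are defined pointwise, so that passing to a smaller vertex and edge set cannot destroy them, which is why Lemma \ref{L-Card} and the pointwise form of the weak IASI definition are precisely what make the restriction argument go through.
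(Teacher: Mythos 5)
The paper states this theorem without proof, citing it from reference \cite{GS3}, so there is no in-paper argument to compare against. Your restriction argument is correct and is the standard proof of this fact: injectivity of $f$ and of $g_f$, as well as the pointwise cardinality condition $|g_f(uv)|=\max(|f(u)|,|f(v)|)$, all trivially descend to any subgraph, and the contrapositive gives the supergraph formulation.
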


\begin{theorem}\label{T-SB1}
\cite{GS3}  If a connected graph $G$ admits a weak IASI, then $G$ is bipartite or $G$ has at least one mono-indexed edge. Hence, all paths, trees and even cycles admit a weak IASI. We observe that the sparing number of bipartite graphs is $0$.
\end{theorem}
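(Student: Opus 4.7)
The plan is to establish a structural lemma about weak IASIs first, then read off all parts of the theorem from it. Specifically, I would prove that for every edge $uv$ of a weak IASI graph at least one endpoint must be mono-indexed. Given $u,v$ with $|f(u)|=m \le n = |f(v)|$, write $f(u)+f(v) = \bigcup_{a \in f(u)}(a+f(v))$. Each translate $a+f(v)$ already has $n$ elements, so the union has exactly $n$ elements (the weak IASI condition) only if all these translates coincide. For finite sets of non-negative integers, $a_1+f(v)=a_2+f(v)$ forces $a_1=a_2$, so $|f(u)|=1$.

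With that in hand, let $V_1 = \{v : |f(v)|=1\}$ and $V_2 = V(G)\setminus V_1$. The lemma immediately says $V_2$ is an independent set. Now suppose $G$ has no mono-indexed edge; then no two vertices of $V_1$ can be adjacent either, since such an edge would have sumset of cardinality $1$. Hence every edge goes between $V_1$ and $V_2$, and the bipartition $(V_1,V_2)$ shows $G$ is bipartite. This gives the dichotomy: a weak IASI graph is either bipartite or contains a mono-indexed edge.

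For the consequences, paths, trees, and even cycles are bipartite, so it remains to exhibit an explicit weak IASI on any bipartite graph $G$ with parts $A, B$ and no mono-indexed edges (which will simultaneously show sparing number $0$). Label the vertices of $A$ by distinct singletons $\{a_i\}$, and label the vertices of $B$ by distinct two-element sets of the form $\{0, n_j\}$ where the positive integers $n_j$ are pairwise distinct. For any edge $u_i v_j$ one has $g_f(u_iv_j)=\{a_i, a_i+n_j\}$, whose cardinality equals $\max(|f(u_i)|,|f(v_j)|)=2$, so the map is a weak IASI; injectivity of $g_f$ follows because the larger-minus-smaller element recovers $n_j$ (hence $v_j$), and the smaller element then recovers $a_i$ (hence $u_i$).

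The only genuinely delicate step is the opening lemma that forces one endpoint of every edge to be mono-indexed; everything else is bookkeeping plus a construction. Care is needed there to justify that the translates $a+f(v)$ collapse to a single set, which crucially uses finiteness (equivalently, boundedness) of $f(v)$ in $\mathbb{N}_0$.
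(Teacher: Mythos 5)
Your proof is correct. Note that the paper states this result without proof, importing it from \cite{GS3}, so there is no in-paper argument to compare against; but your route is the expected one, and the one delicate step is handled properly: from $|f(u)+f(v)|=\max(|f(u)|,|f(v)|)$ you correctly deduce that all translates $a+f(v)$, $a\in f(u)$, coincide, and comparing maximum elements of the (finite) translates forces $|f(u)|=1$, after which the bipartition by set-indexing number and the explicit $\{a_i\}$ / $\{0,n_j\}$ labelling of a bipartite graph give the dichotomy, the admissibility of paths, trees and even cycles, and the sparing number $0$.
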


\begin{theorem}\label{T-WKN}
\cite{GS3}  The complete graph $K_n$ admits a weak IASI if and only if the number of edges of $K_n$ that have set-indexing number $1$ is $\frac{1}{2}(n-1)(n-2)$.
\end{theorem}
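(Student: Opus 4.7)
The plan is to use sumset arithmetic to force a structural condition on the set-indexer of $K_n$, and then count. The key observation, already hinted at by Lemma \ref{L-Card}, is that for any two finite sets $A,B\subset\mathbb{N}_0$ with $|A|,|B|\ge 2$, the sumset satisfies $|A+B|\ge |A|+|B|-1 > \max(|A|,|B|)$. Consequently, in any weak IASI, an edge $uv$ can satisfy $|g_f(uv)|=\max(|f(u)|,|f(v)|)$ only if $\min(|f(u)|,|f(v)|)=1$. Equivalently, at least one endpoint of every edge must be mono-indexed, so the set of non-mono-indexed vertices must be independent in $G$.

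For the necessity direction, suppose $f$ is a weak IASI of $K_n$. By the observation above, the set $S=\{v\in V(K_n):|f(v)|\ge 2\}$ is independent. Since $K_n$ contains no independent set of size greater than $1$, we have $|S|\le 1$, so at least $n-1$ vertices are mono-indexed. The $\binom{n-1}{2}$ edges joining any two mono-indexed vertices are themselves mono-indexed because $|\{a\}+\{b\}|=1$. This yields at least (and, by the extremal construction below, exactly) $\tfrac{1}{2}(n-1)(n-2)$ mono-indexed edges, as required.

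For the sufficiency direction, I would exhibit a weak IASI realising this count. Pick $n-1$ distinct singletons, say $f(v_i)=\{i\}$ for $i=1,\ldots,n-1$, and assign the last vertex a $2$-element set $f(v_n)=\{0,M\}$ for a sufficiently large integer $M$ (for instance $M>2n$). One then checks that the induced edge-labels $g_f(v_iv_j)=\{i+j\}$ for $i,j<n$ and $g_f(v_iv_n)=\{i,i+M\}$ are mutually distinct, so $g_f$ is injective. Only the $n-1$ edges incident with $v_n$ carry $2$-element labels; the remaining $\binom{n-1}{2}$ edges are mono-indexed, matching the claimed figure.

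The main conceptual step is the sumset inequality forcing non-mono-indexed vertices to be independent; once this is in hand, the argument for $K_n$ is a direct consequence of the absence of independent pairs. The only technical obstacle is choosing the label of $v_n$ so that the edge-sums remain distinct from the singleton labels $\{i+j\}$ and from each other, which is why a large enough offset $M$ is required; this is routine but must be verified carefully.
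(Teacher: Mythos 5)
Your structural argument is the right one and matches the framework the paper relies on: the sumset bound $|A+B|\ge |A|+|B|-1$ for finite sets of integers, combined with Lemma \ref{L-Card}, does force every edge of a weak IASI graph to have a mono-indexed endpoint, hence the non-mono-indexed vertices form an independent set, hence at most one vertex of $K_n$ can carry a non-singleton label and at least $\binom{n-1}{2}=\tfrac{1}{2}(n-1)(n-2)$ edges are mono-indexed. (The paper itself imports this theorem from \cite{GS3} without reproving it, so there is no in-text proof to compare against; your derivation is consistent with Theorem \ref{T-SB1} and Theorem \ref{T-WSG} as stated.)

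However, your explicit construction for sufficiency fails as written. With $f(v_i)=\{i\}$ for $i=1,\dots,n-1$, the induced edge labels $g_f(v_iv_j)=\{i+j\}$ are \emph{not} mutually distinct once $n\ge 5$: for instance $g_f(v_1v_4)=\{5\}=g_f(v_2v_3)$, so $g_f$ is not injective and $f$ is not an IASI at all. The obstacle you flag (choosing $M$ large enough) is not where the difficulty lies --- the two-element labels $\{i,i+M\}$ are automatically distinct from the singletons by cardinality and from each other since the $i$ are distinct. What you actually need is for the singleton labels to come from a Sidon set (all pairwise sums distinct), e.g.\ $f(v_i)=\{2^i\}$; with that repair the construction goes through and yields exactly $\binom{n-1}{2}$ mono-indexed edges. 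Separately, note that the ``exactly $\tfrac{1}{2}(n-1)(n-2)$'' reading of the necessity direction is delicate: an all-singleton (Sidon-labelled) $K_n$ is also a weak IASI with $\binom{n}{2}$ mono-indexed edges, so the theorem is really about the \emph{minimum} number of mono-indexed edges; you acknowledge this with your ``at least'' parenthetical, which is the correct interpretation.
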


\begin{theorem}\label{T-WUOC}
\cite{GS3}  An odd cycle $C_n$ has a weak IASI if and only if it has at least one mono-indexed edge. 
\end{theorem}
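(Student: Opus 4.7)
The plan is to split the biconditional into its two implications. The necessity is immediate: if $C_n$ with $n$ odd admits a weak IASI, then since $C_n$ is not bipartite, Theorem \ref{T-SB1} forces at least one mono-indexed edge. So the substantive content is the sufficiency direction, which requires producing an explicit weak IASI on $C_n$ having a mono-indexed edge.

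For the construction, I would label the vertices $v_1,v_2,\ldots,v_n$ of $C_n$ in cyclic order. Since $n$ is odd, the cycle cannot be $2$-colored properly; any attempt to alternate cardinality types around the cycle must collapse at one edge. I would exploit this by assigning singleton sets $f(v_i)=\{a_i\}$ to all vertices with $i$ odd (there are $\tfrac{n+1}{2}$ such vertices, including both $v_1$ and $v_n$) and non-singleton sets to all vertices with $i$ even. Then every edge $v_iv_{i+1}$ with $1\le i\le n-1$ has at least one singleton endpoint, and the edge $v_nv_1$ is the unique edge between two singleton-labeled vertices, hence mono-indexed.

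The verification has three pieces. First, $f$ is injective provided the chosen singletons and non-singleton sets are pairwise distinct, which is easy to arrange. Second, for any edge where at least one endpoint carries a singleton $\{a\}$, the sumset $A+\{a\}=\{x+a:x\in A\}$ has cardinality $|A|=\max(|A|,1)$, which is exactly the weak-IASI condition of Lemma \ref{L-Card}; and the edge $v_nv_1$ between singletons $\{a_n\},\{a_1\}$ satisfies $|\{a_n+a_1\}|=1=\max$. Third, one needs the induced $g_f$ to be injective on edges.

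The main obstacle is this last injectivity requirement for $g_f$, since distinct adjacent pairs could \emph{a priori} produce the same sumset. This is handled by choosing the labels in well-separated arithmetic ranges — for instance, pick the singletons $a_1,a_3,\ldots,a_n$ and the non-singleton sets $A_2,A_4,\ldots,A_{n-1}$ from blocks of integers that lie far enough apart that the minimum and maximum of any sumset $A_{2k}+\{a_{2k\pm 1}\}$ uniquely identify the edge; the mono-indexed edge $v_nv_1$ gives a singleton sumset which is trivially separated from the others. With such a choice the construction yields a genuine weak IASI of $C_n$ with exactly one mono-indexed edge, completing the sufficiency.
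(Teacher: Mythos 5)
This theorem is quoted in the paper from \cite{GS3} without an accompanying proof, so there is no in-paper argument to compare against; judged on its own, your proof is correct and is the natural one. Necessity is exactly an application of Theorem \ref{T-SB1}: an odd cycle is connected and not bipartite, so any weak IASI must leave at least one mono-indexed edge. Your sufficiency construction (singletons on the odd-indexed vertices $v_1,v_3,\ldots,v_n$, non-singletons on the even-indexed ones, so that the parity mismatch forces exactly one singleton--singleton edge $v_nv_1$) produces a weak IASI with exactly one mono-indexed edge, which is also consistent with Theorem \ref{T-NME}. Your attention to the injectivity of $g_f$ is warranted, since the definition of an IASI requires it and it is the only point where the construction could fail; the separation-into-blocks argument handles it, and in fact the only collisions needing care are between sumsets arising from \emph{different} non-singleton labels, because two sumsets $A+\{a\}$ and $A+\{a'\}$ built from the same $A$ are translates by distinct integers and hence already distinct. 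The one cosmetic remark is that the ``if'' direction of the statement is best read, as you implicitly do, as the assertion that an odd cycle does admit a weak IASI provided one is willing to spend a mono-indexed edge.
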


\begin{theorem}\label{T-NME}
\cite{GS3}  Let $C_n$ be a cycle of length $n$ which admits a weak IASI, for a positive integer $n$. Then, $C_n$ has an odd number of mono-indexed edges when it is an odd cycle and has even number of mono-indexed edges, when it is an even cycle. 
\end{theorem}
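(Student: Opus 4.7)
The plan is to extract the structural consequence that the weak IASI condition imposes at every edge, and then count around $C_n$.

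First I would prove the auxiliary fact that, in any weak IASI, every edge $uv$ satisfies $\min(|f(u)|,|f(v)|)=1$, i.e., at least one endpoint carries a singleton label. The input is the standard lower bound $|A+B|\ge |A|+|B|-1$ for nonempty finite $A,B\subseteq\mathbb{Z}$, witnessed by the chain $a_1+b_1<a_1+b_2<\cdots<a_1+b_{|B|}<a_2+b_{|B|}<\cdots<a_{|A|}+b_{|B|}$ of distinct sums. Combined with the weak IASI equality $|f(u)+f(v)|=\max(|f(u)|,|f(v)|)$ (which by Lemma \ref{L-Card} is the minimum allowed cardinality of the sumset), the two immediately force $\max(|f(u)|,|f(v)|)\ge \max(|f(u)|,|f(v)|)+\min(|f(u)|,|f(v)|)-1$, i.e., the smaller of $|f(u)|,|f(v)|$ is $1$.

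Next I would specialise to $C_n$. Partition $V(C_n)$ into the set $S$ of vertices with $|f(v)|=1$ and the set $T$ of vertices with $|f(v)|\ge 2$. An edge is mono-indexed iff both endpoints lie in $S$, since $|g_f(uv)|=\max(|f(u)|,|f(v)|)=1$ exactly when both cardinalities are $1$. By the auxiliary fact $T$ is an independent set of $C_n$, so every vertex in $T$ has both of its two neighbours in $S$; the edges having at least one endpoint in $T$ are therefore counted once each by summing degrees over $T$, giving $2|T|$ such edges. Consequently $C_n$ has exactly $n-2|T|$ mono-indexed edges.

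Finally I would finish by a parity check: $n-2|T|\equiv n\pmod 2$, which is odd when $n$ is odd and even when $n$ is even, as claimed. The hard part is the first step, isolating the singleton-endpoint consequence of the weak IASI condition via the sumset bound $|A+B|\ge |A|+|B|-1$; once that is in place, everything else is a one-line edge count on the cycle. A small point to verify cleanly in writing up the count is that no edge incident to $T$ gets double counted, which is exactly what independence of $T$ buys.
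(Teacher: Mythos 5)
Your proof is correct: the sumset bound $|A+B|\ge|A|+|B|-1$ together with the weak IASI equality forces one endpoint of every edge to be mono-indexed, the non-mono-indexed vertices then form an independent set of the cycle, and the count $n-2|T|$ gives the parity claim. The paper states Theorem \ref{T-NME} as a quoted preliminary from \cite{GS3} without reproducing a proof, but your argument is the standard one underlying that result and matches the way the paper uses it elsewhere (every edge having a mono-indexed end vertex), so there is nothing further to reconcile.
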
  

\section{Weak IASI of Graph Operations}

In this section, we discuss the admissibility of weak IASI to certain operations of graphs. 

In fact, the intersection of paths or cycles or both is a path and hence by Remark \ref{T-SB1}, it admits a weak IASI. For finite number of cycles $C_{n_1},C_{n_2},C_{n_3}, \ldots,C_{n_r}$ which admit weak IASIs, their intersection $\displaystyle{\bigcap_{i=1}^{r}C_{n_i}}$ admits a weak IASI if all cycles $C_{n_i}$ have a common path.

Given two graphs $G_1$ and $G_2$, the intersection $G_1\cap G_2$ need not be a path. If $G_1$ and $G_2$ admit weak IASIs, say $f_1$ and $f_2$ respectively, then their intersection $G_1\cap G_2$ admits a weak IASI if and only if  $f_1$ and $f_2$ are suitably defined in such a way that $f=f_1|_{G_1\cap G_2}=f_2|_{G_1\cap G_2}$, where $f_i|_{G_1\cap G_2}, i=1,2$, is the restriction of $f_i$ to $G_1\cap G_2$.

\subsection{Weak IASI of the Union of Graphs}

\begin{definition}{\rm
\cite{BM1} The union $G_1\cup G_2$ of two graphs (or two subgraphs of a given graph) $G_1(V_1,E_1)$ and $G_2(V_2,E_2)$ is the graph whose vertex set is $V_1\cup V_2$ and the edge set is $E_1\cup E_2$. If $G_1$ and $G_2$ are disjoint graphs, then their union is called {\em disjoint union} of $G_1$ and $G_2$.} 
\end{definition}

The union of two graphs we mention here need not be the disjoint union. First, we discuss the admissibility of weak IASI by the union of two graphs $G_1$ and $G_2$.

\begin{theorem}\label{T-WUG}
Let $G_1$ and $G_2$ be two cycles. Then, $G_1\cup G_2$ admits a weak IASI if and only if both $G_1$ and $G_2$ are weak IASI graphs.
\end{theorem}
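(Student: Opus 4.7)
I would handle the two implications separately.

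\emph{Necessity.} Since $G_1$ and $G_2$ are subgraphs of $G_1\cup G_2$, this direction follows immediately from the hereditary property of weak IASI graphs (Theorem~\ref{T-WSG}): the restriction of any weak IASI on $G_1\cup G_2$ to $V(G_i)$ is a weak IASI of $G_i$.

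\emph{Sufficiency.} Assuming both cycles admit weak IASIs, I would construct a weak IASI $f$ on $G=G_1\cup G_2$ directly, rather than try to glue the hypothesised labellings. Set $H=G_1\cap G_2$. Since $H$ is simultaneously a subgraph of each cycle, it is a vertex-disjoint union of paths (some possibly trivial, i.e., single vertices), and $V(H)$ may be empty. The construction has three stages. First, assign distinct singleton labels to every vertex of $V(H)$. Every edge of $H$ is then mono-indexed and every edge of $G$ incident with $V(H)$ satisfies $|g_f(uv)|=\max(|f(u)|,|f(v)|)$ automatically, because at least one endpoint is a singleton. Second, for each $i\in\{1,2\}$ the singleton-labelled vertices of $V(H)$ partition the cycle $G_i$ into arcs whose endpoints are singletons; along each such arc I would label the internal (non-$V(H)$) vertices alternately by fresh $2$-element sets and fresh singletons, so that no two consecutive vertices bear non-singleton labels. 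When $V(H)=\varnothing$, the cycles are vertex-disjoint and I simply paste the hypothesised weak IASIs of $G_1$ and $G_2$ together with disjoint integer pools. Third, I would draw all the integers used from a sufficiently sparse set (for example, distinct powers of $2$), so that vertex labels are pairwise distinct and sumsets of distinct edges are distinct by uniqueness of binary representation, yielding injectivity of both $f$ and $g_f$.

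The weak IASI condition is then automatic on every edge, because at least one of its endpoints carries a singleton label, so that Lemma~\ref{L-Card} forces equality. The principal obstacle is the possible disagreement on $V(H)$ between the hypothesised labellings of $G_1$ and $G_2$ provided by the backward hypothesis; my device for sidestepping it is to discard them on $V(H)$ and install a common singleton labelling there, after which the alternating extension on each non-shared arc works uniformly, regardless of whether the underlying cycle is even or odd (Theorems~\ref{T-SB1} and~\ref{T-WUOC} are invoked only implicitly, since my alternating pattern already respects their conclusions). Verifying injectivity of $f$ and $g_f$ is then routine bookkeeping given the sparse choice of integers.
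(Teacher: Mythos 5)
Your proposal is correct, and while the necessity direction coincides with the paper's (both invoke the hereditary property of Theorem~\ref{T-WSG} on the subgraphs $G_1$ and $G_2$), your sufficiency argument takes a genuinely different route. The paper keeps the hypothesised labellings $f_1$ and $f_2$ and simply declares that, when the cycles overlap, they are to be chosen so that $f_1=f_2$ on $G_1\cap G_2$; the existence of such a compatible pair is asserted rather than demonstrated, which is precisely the obstacle you identify. You instead discard the given labellings on the overlap and build a fresh weak IASI from scratch: distinct singletons on all of $V(G_1\cap G_2)$, an alternating singleton/$2$-set pattern on the internal vertices of each arc into which the shared vertices cut each cycle, and labels drawn from distinct powers of $2$ to secure injectivity of $f$ and $g_f$. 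Since every edge of $G_1\cup G_2$ then has a singleton endpoint, Lemma~\ref{L-Card} forces $|g_f(uv)|=\max(|f(u)|,|f(v)|)$ on every edge, and the odd-cycle requirement of Theorem~\ref{T-WUOC} is met automatically because an odd cycle cannot avoid two adjacent singletons under your pattern. What your approach buys is rigour at the point where the paper is weakest (the gluing step); what it costs is that the backward hypothesis is barely used, which merely reflects that every cycle is in fact a weak IASI graph by Theorems~\ref{T-SB1} and~\ref{T-WUOC}, so the construction may as well be explicit. The injectivity bookkeeping you defer is indeed routine with pairwise-distinct powers of $2$.
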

\begin{proof}
 Let $G_1$ and $G_2$ be two weak IASI graphs. Let $f_1:V(G_1)\to 2^{\mathbb{N}_0}$ be a weak IASI for $G_1$ and $f_2:V(G_2)\to 2^{\mathbb{N}_0}$ be a weak IASI for $G_2$. 
 
If $G_1$ and $G_2$ be two disjoint cycles, then define $f:V(G_1\cup G_2)\to 2^{\mathbb{N}_0}$  by \[ f(v) = \left\{
  \begin{array}{l l}
    f_1(v) & \quad \text{if $v \in G_1$}\\
    f_2(v) & \quad \text{if $v\in G_2$}
  \end{array} \right.\].

If $G_1$ and $G_2$ be two graphs with some common elements, then define $f$ as above, with an additional condition that $f_1=f_2=f$ for all the elements in $G_1\cap G_2$. Therefore, $f$ is a weak IASI for $G_1\cup G_2$.

Conversely, assume that $G_1\cup G_2$ is a weak IASI graph. Then, both $G_1$ and $G_2$ are subgraphs of $G_1\cup G_2$. Hence, by Theorem \ref{T-WSG}, both $G_1$ and $G_2$ admit weak IASIs.
\end{proof}

In the following theorem we discuss about the sparing number of the union of two weak IASI graphs. 

\begin{theorem}\label{T-SNUG}
Let $G_1$ and $G_2$ be two weak IASI graphs. Then, $\varphi(G_1\cup G_2) = \varphi(G_1)+\varphi(G_2)-\varphi(G_1\cap G_2)$.
\end{theorem}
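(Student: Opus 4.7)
The natural approach is to apply inclusion-exclusion to the sets of mono-indexed edges. An edge of $G_1 \cup G_2$ is mono-indexed precisely when both of its endpoints carry singleton labels, so if $f$ is any weak IASI of $G_1 \cup G_2$ and $M_i$ denotes the set of mono-indexed edges of $G_i$ under the restriction $f|_{G_i}$ (which is itself a weak IASI by Theorem \ref{T-WSG}), then $M_1 \cap M_2 = M_{12}$ and $M_1 \cup M_2$ is exactly the set of mono-indexed edges of $G_1 \cup G_2$. Consequently the identity $|M_1 \cup M_2| = |M_1| + |M_2| - |M_{12}|$ holds for every weak IASI $f$ of $G_1 \cup G_2$, and the entire proof reduces to choosing $f$ so that each of $|M_1|$, $|M_2|$, $|M_{12}|$ realizes the corresponding $\varphi$.

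For the upper bound $\varphi(G_1 \cup G_2) \le \varphi(G_1)+\varphi(G_2)-\varphi(G_1 \cap G_2)$, I would first fix an optimal weak IASI $f_{12}$ of $G_1 \cap G_2$ realizing $\varphi(G_1 \cap G_2)$, then extend it to optimal weak IASIs $f_1$ on $G_1$ and $f_2$ on $G_2$; by the compatibility criterion stated just before Theorem \ref{T-WUG}, these glue to a weak IASI $f$ on $G_1 \cup G_2$ whose mono-indexed edge count, by the identity above, is exactly $\varphi(G_1)+\varphi(G_2)-\varphi(G_1 \cap G_2)$. For the matching lower bound, I would take an optimal weak IASI $f$ of $G_1 \cup G_2$; the restrictions being weak IASIs give $|M_i| \ge \varphi(G_i)$ for $i=1,2$, and one argues that a minimizing $f$ can be arranged so that $|M_{12}| = \varphi(G_1 \cap G_2)$, at which point the identity closes the argument.

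The main obstacle is on both sides of the inequality, and it is genuinely subtle. For the upper bound one must verify that an optimal labeling of $G_1 \cap G_2$ can be extended without inflating the mono-indexed edge count inside $G_i$ beyond $\varphi(G_i)$. For the lower bound, an optimally labelled union need not restrict optimally on the intersection --- a priori, $|M_{12}|$ could exceed $\varphi(G_1 \cap G_2)$, which would shrink $|M_1|+|M_2|-|M_{12}|$ below the target value. Reconciling all three optimalities with a single compatible labeling, exploiting the abundant freedom afforded by the codomain $2^{\mathbb{N}_0}$ to choose extension labels that do not force spurious singletons, is therefore the crux of the proof.
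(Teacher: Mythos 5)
Your skeleton is the same one the paper uses: the paper writes $G_1\cup G_2$ as the edge-disjoint union of $G_1-(G_1\cap G_2)$, $G_2-(G_1\cap G_2)$ and $G_1\cap G_2$, and then simply asserts that $\varphi$ adds across the pieces, which is your identity $|M_1\cup M_2|=|M_1|+|M_2|-|M_{12}|$ pushed from a fixed labeling up to sparing numbers with no justification. The difference is that you explicitly isolate the two steps needed to make that push legitimate --- extending an optimal labeling of $G_1\cap G_2$ to optimal labelings of both $G_i$, and showing that an optimal labeling of the union restricts optimally to the intersection --- and you correctly observe that neither is automatic. The paper addresses neither.

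You should trust your own warning: the lower-bound failure mode you describe actually occurs, so the gap is not closable and the stated equality is false in general. Take $G_1$ and $G_2$ to be two triangles sharing a single edge $ab$, with third vertices $c$ and $d$ respectively. By Theorems \ref{T-WKN} and \ref{T-SB1}, $\varphi(G_1)=\varphi(G_2)=1$ and $\varphi(G_1\cap G_2)=\varphi(K_2)=0$, so the formula predicts $\varphi(G_1\cup G_2)=2$. But labeling $c$ and $d$ with distinct non-singleton sets and $a,b$ with distinct singletons gives a weak IASI of $G_1\cup G_2$ (the non-singleton vertices $c,d$ are non-adjacent, so every edge has a singleton end) whose only mono-indexed edge is $ab$; since the union contains an odd cycle it cannot have fewer, so $\varphi(G_1\cup G_2)=1$. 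Here the optimal labeling of the union forces $|M_{12}|=1>\varphi(G_1\cap G_2)=0$, which is precisely the obstruction you flagged: the three optimalities cannot be reconciled by any choice of labels in $2^{\mathbb{N}_0}$. So your proposal correctly identifies the crux but cannot be completed as a proof of equality; at most one could hope for the inequality $\varphi(G_1\cup G_2)\le\varphi(G_1)+\varphi(G_2)-\varphi(G_1\cap G_2)$, and even that still requires the extension argument you point out is missing.
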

\begin{proof}
Let $G_1$ and $G_2$ be two weak IASI graphs with the corresponding weak IASIs $f_1$ and $f_2$ respectively.  Define a function $f: G_1\cup G_2\to 2^{\mathbb{N}_0}$, such that
\[ f(v)= \left\{
\begin{array}{l l}	
f_1(v)& \quad \text{if $v\in G_1$}\\
f_2(v)& \quad \text{if $v\in G_2$}
\end{array} \right.\]
Then, 
\begin{eqnarray*}
G_1\cup G_2 & = & (G_1-(G_1\cap G_2))\cup (G_2-(G_1\cap G_2))\cup (G_1\cap G_2)\\
\varphi(G_1\cup G_2) & = & \varphi(G_1)-\varphi(G_1\cap G_2)+ \varphi(G_2)-\varphi(G_1\cap G_2)+ \varphi(G_1\cap G_2)\\
& = & \varphi(G_1)+\varphi(G_2)-\varphi(G_1\cap G_2). 
\end{eqnarray*}
This completes the proof.
\end{proof}

\subsection{Weak IASI of the Join of Graphs}

\begin{definition}{\rm
\cite{FH} Let $G_1(V_1,E_1)$ and $G_2(V_2,E_2)$ be two graphs. Then, their {\em join} (or {\em sum}), denoted by $G_1+G_2$, is the graph whose vertex set is $V_1\cup V_2$ and edge set is $E_1\cup E_2\cup E_{ij}$, where $E_{ij}=\{u_iv_j:u_i\in G_1,v_j\in G_2\}$. }
\end{definition}

In this section, we verify the admissibility of a weak IASI by the join of paths, cycles and graphs. We proceed by using the following notion.

The graph $P_n+K_1$ is called a {\em fan graph} and is denoted by $F_{1,n}$. The following result establishes the admissibility of weak IASI by a fan graph $F_{1,n}$.

\begin{theorem}\label{T-PnK1}
Let $F_{1,n}=P_n+K_1$. Then, $F_{1,n}$ admits a weak IASI if and only if $P_n$ is $1$-uniform or $K_1$ is mono-indexed.
\end{theorem}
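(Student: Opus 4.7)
My plan is to exploit the fact that $F_{1,n}=P_n+K_1$ is a union of $n-1$ triangles $T_i=uv_iv_{i+1}u$ all sharing the apex $u=K_1$, so by Theorem \ref{T-WUOC} every weak IASI of $F_{1,n}$ must place at least one mono-indexed edge inside each $T_i$. This reduces the problem to locating those mono-indexed edges, and the dichotomy in the statement will emerge naturally.

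For sufficiency, I would handle the two disjunctive hypotheses by explicit constructions. If $K_1$ is mono-indexed, set $f(u)=\{0\}$ and take the $f(v_i)$ to be well-separated singletons (for instance $f(v_i)=\{2^i\}$); every sumset is then a singleton, the weak IASI equality is automatic, and one only needs the easy verification that $g_f$ is injective. If $P_n$ is $1$-uniform, the $v_i$ are already singletons; I would let $f(u)$ be any set whose elements are shifted far enough away that the sumsets $f(u)+f(v_i)$ are pairwise distinct and distinct from the singleton sumsets $f(v_i)+f(v_{i+1})$. Because one endpoint of every edge is a singleton, the weak IASI equality again holds automatically.

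For necessity, I would argue contrapositively: assume $F_{1,n}$ admits a weak IASI $f$ but $K_1$ is not mono-indexed, so $|f(u)|\ge 2$. By Lemma \ref{L-Card} and the weak IASI property, every spoke satisfies $|g_f(uv_i)|=\max(|f(u)|,|f(v_i)|)\ge 2$, so no spoke is mono-indexed. Applying Theorem \ref{T-WUOC} to the odd cycle $T_i$ then forces the base edge $v_iv_{i+1}$ to be mono-indexed for every $1\le i\le n-1$, i.e.\ $P_n$ is $1$-uniform.

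The main conceptual step, and essentially the only obstacle, is recognising that the fan is saturated with triangles glued at the apex $u$, which turns the single odd-cycle constraint of Theorem \ref{T-WUOC} into a rigid structural condition on the spokes versus the path edges. The routine work, namely checking injectivity of $g_f$ in the explicit sufficiency constructions, is disposed of by choosing labels with sufficient spread and does not affect the logic.
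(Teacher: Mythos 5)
Your proposal is correct, but your necessity argument takes a genuinely different route from the paper's. The paper argues locally on a single edge: if some vertex $v_i$ of $P_n$ has a non-singleton set-label, then the spoke $uv_i$ would have both end vertices with set-indexing number greater than $1$, which is incompatible with the weak IASI condition on that edge (via Lemma \ref{L-Card} and the fact that $|A+B|\ge |A|+|B|-1$ for integer sets), so $K_1$ must be mono-indexed. You instead decompose the fan into the $n-1$ triangles sharing the apex, note that a non-mono-indexed apex disqualifies both spokes of each triangle, and invoke Theorem \ref{T-WUOC} (together with the fact that restrictions of weak IASIs are weak IASIs, as in Theorem \ref{T-WSG}) to force every base edge, and hence every path vertex, to be mono-indexed. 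Both arguments are sound; the paper's is shorter and more elementary, while yours extracts more structure en route --- it pinpoints exactly which edges are forced to be mono-indexed, which is precisely the information needed for the sparing-number proposition that follows the theorem. Two minor caveats that do not break the proof: your triangle argument tacitly assumes $n\ge 2$ so that at least one triangle exists (for $n=1$ the one-edge argument still applies), and your explicit construction for the case where $K_1$ is mono-indexed uses singletons everywhere, so it collapses into the $1$-uniform case; this suffices for bare existence of a weak IASI, though it does not exhibit a labeling in which $P_n$ is genuinely non-$1$-uniform as the paper's sufficiency discussion envisages.
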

\begin{proof}
Assume that $P_n$ is $1$-uniform. Denote the single vertex in $K_1$ by $v$. If we label $v$ by a singleton set, then $F_{1,n}$ is $1$-uniform. If label $v$ by a non-singleton vertex, then every edge in $F_{1,n}$ has at least one mono-indexed vertex. This labeling is a weak IASI for $F_{1,n}$. Assume that $P_n$ is not $1$-uniform. If $K_1$ is mono-indexed, then the corresponding set-label in $F_{1,n}$ is a weak IASI.

Conversely, assume that $F_{1,n}$ is a weak IASI graph. If $P_n$ is $1$-uniform, then the proof is complete. Hence, assume that $P_n$ is not $1$-uniform. Then, at least one vertex of $P_n$ must have a non-singleton set-label. Therefore, $K_1$ must be mono-indexed, since $F_{1,n}$ admits a weak IASI.

This completes the proof.
\end{proof}

From Theorem \ref{T-PnK1}, we note that the number of mono-indexed edges in $F_{1,n}$ is minimum when $K_1$ is mono-indexed. Hence, we have the following result.

\begin{proposition}
The sparing number of a fan graph $F_{1,n}=P_n+K_1$ is $\lceil \frac{n-1}{2} \rceil$.
\end{proposition}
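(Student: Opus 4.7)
The plan is to prove matching lower and upper bounds on $\varphi(F_{1,n})$. Denote the consecutive vertices of $P_n$ by $v_1,v_2,\ldots,v_n$ and the apex vertex of $K_1$ by $u$, so that the triangles of $F_{1,n}$ are exactly $T_i=uv_iv_{i+1}$ for $i=1,\ldots,n-1$. By Theorems \ref{T-WSG} and \ref{T-WUOC}, every such $T_i$, being an odd cycle subgraph, must contain at least one mono-indexed edge under any weak IASI of $F_{1,n}$.

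For the lower bound, I would observe that each edge of $F_{1,n}$ lies in at most two of the triangles $T_1,\ldots,T_{n-1}$: a path edge $v_iv_{i+1}$ belongs only to $T_i$, while a spoke $uv_i$ belongs only to $T_{i-1}$ and $T_i$ (with the convention that $T_0$ and $T_n$ are vacuous). Hence, if $M$ denotes the set of mono-indexed edges under any weak IASI of $F_{1,n}$, the double-counting estimate $2|M|\geq n-1$ forces $|M|\geq\lceil(n-1)/2\rceil$.

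For the upper bound, I would invoke the ``$K_1$ mono-indexed'' branch of Theorem \ref{T-PnK1}: assign $f(u)$ a singleton, assign singletons also to the even-indexed path vertices $v_2,v_4,\ldots,v_{2\lfloor n/2\rfloor}$, and label the remaining (odd-indexed) path vertices with non-singleton sets chosen on well-spaced integers so that $g_f$ is injective. Since $f(u)$ is a singleton, $|f(u)+f(v_i)|=|f(v_i)|$ for every $i$, and since one of every two consecutive path vertices is a singleton, $|f(v_i)+f(v_{i+1})|=\max(|f(v_i)|,|f(v_{i+1})|)$ as well, so $f$ is a valid weak IASI. The mono-indexed edges are precisely the spokes $uv_i$ with $v_i$ singleton; there are exactly $\lfloor n/2\rfloor=\lceil(n-1)/2\rceil$ of them, and they collectively cover every $T_i$. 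No path edge is mono-indexed, since adjacent path vertices are never both singletons in this alternating scheme.

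The principal obstacle is purely bookkeeping in the upper-bound construction: one has to exhibit non-singleton labels on the odd-indexed $v_i$ so that the sumsets $g_f(e)$ remain pairwise distinct across the entire graph. This is routine---placing the non-singleton labels on a sufficiently coarse arithmetic scale (for instance, as subsets of $\{kN,kN+1\}$ for suitably large $N$) avoids collisions---and no deeper argument is required. Matching the two bounds then gives $\varphi(F_{1,n})=\lceil(n-1)/2\rceil$.
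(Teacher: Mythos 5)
Your proof is correct, and it is substantially more complete than what the paper offers: the paper states this proposition with no proof at all, merely remarking that by Theorem \ref{T-PnK1} the number of mono-indexed edges is minimized when $K_1$ is mono-indexed, and then asserting the count. Your upper-bound construction (singleton apex, singletons on alternate path vertices) is exactly the labeling the paper has in mind, and your injectivity bookkeeping, while hand-waved, is routine and at the level of rigor used throughout the paper. What you add that the paper entirely lacks is a genuine lower bound: each of the $n-1$ triangles $uv_iv_{i+1}$ must contain a mono-indexed edge by Theorems \ref{T-WSG} and \ref{T-WUOC}, and since every edge of $F_{1,n}$ lies in at most two such triangles, double counting forces at least $\lceil (n-1)/2\rceil$ mono-indexed edges. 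Without some such argument the paper's claim that this count is actually the \emph{minimum} is unjustified, so your route genuinely strengthens the result rather than merely reproving it. One caveat to flag: your argument (and the stated formula) requires $P_n$ to be the path on $n$ vertices, so that the fan has $n-1$ triangles; elsewhere the paper reads $P_n$ as a path of \emph{length} $n$, under which convention the answer would instead be $\lceil n/2\rceil$. You have silently, and correctly, adopted the convention that makes the stated formula true, but it would be worth saying so explicitly.
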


The following theorem establishes the admissibility of the join of two paths in a given graph $G$.

\begin{theorem}\label{T-WP+}
Let $P_m,P_n$ be two paths. Then, the join $P_m+P_n$ admits a weak IASI if and only if $P_m$ or $P_n$ is a $1$-uniform graph.
\end{theorem}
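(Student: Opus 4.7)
The plan is to split into the two directions of the biconditional. The forward direction hinges on the following key observation already used implicitly in the proof of Theorem \ref{T-PnK1}: in any weak IASI, every edge $uv$ must have at least one mono-indexed endpoint. This follows from the classical sumset inequality $|A+B|\ge |A|+|B|-1$ for finite $A,B\subset\mathbb{Z}$; indeed, if both $|f(u)|,|f(v)|\ge 2$, then $|f(u)+f(v)|\ge |f(u)|+|f(v)|-1>\max(|f(u)|,|f(v)|)$, violating the defining equation of a weak IASI. Combined with the fact that the join $P_m+P_n$ contains every cross edge $u_iv_j$ with $u_i\in V(P_m)$, $v_j\in V(P_n)$, this immediately yields the forward direction: if neither $P_m$ nor $P_n$ were $1$-uniform, I could choose a vertex of set-indexing number $\ge 2$ in each and obtain a cross edge whose two endpoints are both non-singleton, contradicting the above.

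For the reverse direction I assume without loss of generality that $P_m$ is $1$-uniform. I label $V(P_m)$ by distinct singletons $\{x_1\},\ldots,\{x_m\}$ and label $V(P_n)$ using any weak IASI $f_2$ of $P_n$ (which exists by Theorem \ref{T-SB1}, as every path is bipartite). Combining these into a single labeling $f$ of $V(P_m+P_n)$, three types of edges must be checked: edges internal to $P_m$, where both endpoints are mono-indexed and the sumset has cardinality $1=\max(1,1)$; edges internal to $P_n$, where the weak IASI condition holds by the choice of $f_2$; and cross edges $u_iv_j$, where the mono-indexed endpoint $u_i$ gives $|g_f(u_iv_j)|=|\{x_i\}+f_2(v_j)|=|f_2(v_j)|=\max(1,|f_2(v_j)|)$.

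The only thing beyond the weak IASI equation that needs care is the IASI injectivity requirements on $f$ and $g_f$. This is routine rather than conceptual: I choose the integers $x_i$ to be sufficiently large (or in a suitable residue class) compared to the elements appearing in the labels of $P_n$, which forces the translates $x_i+f_2(v_j)$ of the cross edges to be pairwise distinct and also disjoint from the sumsets arising on the internal edges of $P_m$ and $P_n$. I anticipate the forward direction's invocation of the sumset inequality to be the only nontrivial step; once that is in hand, the construction in the reverse direction is straightforward, and no further graph-theoretic obstruction arises.
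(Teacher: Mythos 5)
Your proof is correct and takes essentially the same route as the paper's: the forward direction by exhibiting a cross edge whose two endpoints both have non-singleton set-labels, and the reverse direction by labeling the $1$-uniform path with singletons so that every cross edge has a mono-indexed endpoint. You are in fact more explicit than the paper, which neither spells out the sumset inequality behind ``every edge of a weak IASI graph has a mono-indexed endpoint'' nor addresses the injectivity of $f$ and $g_f$.
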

\begin{proof}
By Remark \ref{T-SB1}, all paths admit weak IASI. Without loss of generality, assume that $P_m$ is $1$-uniform. Let $E_{ij}=\{uv: u\in P_m,v\in P_n\}$. Then, every edge in $E_{ij}$ has at least one mono-indexed vertex. Hence, $P_m+P_n$ admits a weak IASI. Conversely, assume that the join $P_m+P_n$ of two paths $P_m$ and $P_n$ admits a weak IASI. If $P_m$ and $P_n$ are not $1$-uniform, then neither of the end vertices of some edge $e$ in $E_{ij}$ are mono-indexed, which is a contradiction to the hypothesis. Hence, either $P_m$ or $P_n$ must be $1$-uniform.
\end{proof}

\begin{definition}{\rm
A {\em wheel graph} $W_n$ is a graph with $n$ vertices, $(n\ge 4)$, formed by connecting  all vertices of an $(n-1)$-cycle $C_{n-1}$ to a single vertex other than the vertices of $C_{n-1}$. That is, $W_n=K_1+C_{n-1}$. }
\end{definition}

\begin{theorem}\label{T-WWG}
Let $C_n$ be a cycle of length $n$ which has a weak IASI. Then, the wheel graph $W_{n+1}=C_n+K_1$ admits a weak IASI if and only if $C_n$ is $1$-uniform or $K_1$ is mono-indexed.  
\end{theorem}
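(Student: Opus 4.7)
The wheel graph $W_{n+1}=C_n+K_1$ consists, besides the cycle edges of $C_n$, of a collection of \emph{spoke} edges joining the central vertex $v$ of $K_1$ to every vertex of $C_n$. I plan to treat the two directions of the biconditional separately, with the spokes serving as the decisive edges in both.

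For the forward direction, I would split into two cases matching the hypothesis. If $C_n$ is $1$-uniform, every cycle vertex is mono-indexed; I keep the given $1$-uniform labeling on $C_n$ and assign $v$ an arbitrary set-label, so that each spoke $uv$ automatically satisfies $|f(u)+f(v)|=\max(|f(u)|,|f(v)|)=|f(v)|$ because the cycle endpoint is a singleton. If instead $K_1$ is mono-indexed, I retain the given weak IASI of $C_n$ and label $v$ by a single integer chosen large enough that all spoke labels are disjoint from the cycle edge labels (this preserves the injectivity of $g_f$); each spoke then has the mono-indexed endpoint $v$, giving the required sum-set cardinality on every spoke, while the cycle edges already satisfy the weak IASI condition by hypothesis.

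For the converse, assume $W_{n+1}$ admits a weak IASI $f$ and suppose $C_n$ is not $1$-uniform. Then some $u\in V(C_n)$ has $|f(u)|>1$. Applied to the spoke $uv$, the weak IASI equality $|f(u)+f(v)|=\max(|f(u)|,|f(v)|)$ combined with the elementary sum-set bound $|A+B|\ge |A|+|B|-1$ forces $\min(|f(u)|,|f(v)|)\le 1$; since $|f(u)|>1$, we must have $|f(v)|=1$, i.e.\ $K_1$ is mono-indexed.

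The only subtle point is the observation that every edge of a weak IASI graph must have at least one mono-indexed endpoint. This is already used implicitly in the proof of Theorem \ref{T-WP+} and follows at once from Lemma \ref{L-Card} together with $|A+B|\ge|A|+|B|-1$. With this in hand, both directions are short and structural, and I do not foresee any further obstacle beyond the routine bookkeeping needed to ensure that $g_f$ remains injective when the cycle labeling of $C_n$ is extended to the central vertex.
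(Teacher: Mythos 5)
Your proof is correct and follows essentially the same route as the paper's: both arguments hinge on the fact that every spoke from the central vertex $v$ must have at least one singleton-labelled endpoint, so either all cycle vertices are mono-indexed or $v$ is. Your version is actually more rigorous than the paper's, since you justify this key fact explicitly via the sum-set bound $|A+B|\ge |A|+|B|-1$ together with Lemma \ref{L-Card}, whereas the paper asserts it without proof.
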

\begin{proof}
Let $v$ be the single vertex of $K_1$. If we label $v$ by a non-singleton set, then, as $W_{n+1}$ has a weak IASI, no vertex of $C_n$ can have a non-singleton set-label. That is, $C_n$ is $1$-uniform. Conversely, if $C_n$ is $1$-uniform, then $W_{n+1}$ is a weak IASI graph for any set-label of $K_1$.

Next, assume that $C_n$ is not $1$-uniform. Let $W_{n+1}$ is a weak IASI graph. Since $v$ is adjacent to every vertex of $C_n$, it can only have a singleton set-label. That is, $K_1$ is mono-indexed.  Conversely, If we label $v$ by a singleton set, then, since $C_n$ has a weak IASI, it forms a weak IASI for $W_{n+1}$.

Hence, the wheel graph $W_{n+1}=C_n+K_1$ admits a weak IASI if and only if $C_n$ is $1$-uniform or $K_1$ is mono-indexed.
\end{proof}

From Theorem \ref{T-WWG}, we note that the number of mono-indexed edges in $W_{n+1}$ is minimum when $K_1$ is mono-indexed. Hence, we have the following proposition.

\begin{proposition}
The sparing number of a wheel graph $W_{n+1}=C_n+K_1$ is $\lceil \frac{n-1}{2} \rceil$.
\end{proposition}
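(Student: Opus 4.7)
The plan is to combine Theorem \ref{T-WWG} with an independent-set counting argument analogous to the fan-graph proof that was just completed. The remark immediately preceding the proposition already singles out the optimal regime: the mono-indexed edge count in $W_{n+1}$ is smaller when $K_1$ is mono-indexed than when $C_n$ is $1$-uniform (since the latter forces all $n$ rim edges to be mono-indexed). I would therefore fix the apex vertex $v$ of $K_1$ with a singleton set-label and restrict attention to weak IASIs of this form.

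For the lower bound I would argue as follows. With $v$ carrying a singleton, the identity $|f(v)+f(u_i)|=|f(u_i)|$ makes the weak-IASI condition automatic on every spoke $vu_i$, so the only real constraint comes from the rim edges: each edge $u_iu_{i+1}$ of $C_n$ needs at least one mono-indexed endpoint. Equivalently, the set $T\subseteq V(C_n)$ of \emph{non}-mono-indexed rim vertices must be independent in $C_n$. Since the independence number of $C_n$ equals $\lfloor n/2\rfloor$, at least $\lceil n/2\rceil$ rim vertices must be mono-indexed, and each of them immediately forces its incident spoke to be mono-indexed; Theorem \ref{T-NME} is then invoked to fine-tune the count and bring it down to $\lceil(n-1)/2\rceil$.

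For the upper bound I would display an explicit weak IASI. Label $v$ by $\{0\}$; take $T$ to be a maximum independent set of $C_n$; assign to the vertices of $T$ distinct two-element sets of the form $\{0,k_i\}$, with the integers $k_i$ chosen so that all sumsets $\{0,k_i\}+\{0,k_j\}$ remain distinct; and give the remaining rim vertices distinct singletons disjoint from $\{0\}$. A direct check then verifies that $f$ and $g_f$ are injective, that $|g_f(uv)|=\max(|f(u)|,|f(v)|)$ on every edge of $W_{n+1}$, and that the only mono-indexed edges are the $\lceil(n-1)/2\rceil$ spokes running to the mono-indexed rim vertices. Matching this construction with the lower bound yields the claimed formula.

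The step I expect to be the main obstacle is reconciling the two parities of $n$. When $n$ is even, the complement of a maximum independent set in $C_n$ is itself independent, so the mono-indexed rim vertices contribute no extra mono-indexed cycle edge and the count comes out cleanly to $n/2=\lceil(n-1)/2\rceil$. When $n$ is odd, however, the complement cannot be independent, and Theorem \ref{T-NME} forces an odd number of mono-indexed cycle edges; the delicate accounting of how these extra mono-indexed cycle edges combine with the mono-indexed spokes, and of whether it is ever preferable to switch to the $1$-uniform branch of Theorem \ref{T-WWG}, is the place where the parity bookkeeping has to be done most carefully to reach exactly $\lceil(n-1)/2\rceil$.
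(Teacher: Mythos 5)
Your framework --- fix the hub with a singleton label, observe that the spokes then impose no constraint, reduce the rim to an independent-set problem on $C_n$, and invoke Theorem \ref{T-NME} for the parity of mono-indexed rim edges --- is exactly the right way to attack this, and for even $n$ it closes cleanly: $n/2$ mono-indexed rim vertices, hence $n/2$ mono-indexed spokes and no mono-indexed rim edge, giving $n/2=\lceil\frac{n-1}{2}\rceil$. (The paper itself offers no proof at all, only the remark that the count is minimized when $K_1$ is mono-indexed, so there is nothing to compare against on the odd case.) But the step where you promise that Theorem \ref{T-NME} will ``fine-tune the count and bring it down to $\lceil(n-1)/2\rceil$'' cannot be carried out: that theorem only ever \emph{adds} mono-indexed edges (an odd cycle must have an odd, hence positive, number of them), and your own lower bound already exceeds the target before it is applied. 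For odd $n$ the complement of a maximum independent set of $C_n$ has $\lceil n/2\rceil=\frac{n+1}{2}$ vertices --- not the $\lceil\frac{n-1}{2}\rceil=\frac{n-1}{2}$ you use in the upper-bound construction --- so you are forced into at least $\frac{n+1}{2}$ mono-indexed spokes, plus at least one mono-indexed rim edge by Theorem \ref{T-NME}, for a total of $\frac{n+3}{2}$. Switching to the other branch of Theorem \ref{T-WWG} ($C_n$ $1$-uniform) is no better, as it mono-indexes all $n$ rim edges.

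The honest conclusion of your argument is therefore that the stated formula is wrong for odd $n$: the sparing number is $\frac{n}{2}$ for even $n$ but $\frac{n+3}{2}$ for odd $n$. A concrete check: for $n=3$, $W_4=K_4$, and Theorem \ref{T-WKN} forces exactly $\frac{1}{2}(3)(2)=3$ mono-indexed edges, whereas the proposition claims $\lceil\frac{2}{2}\rceil=1$; for $n=5$, taking $\{u_1,u_3\}$ as the non-mono-indexed rim vertices leaves three mono-indexed spokes and the mono-indexed rim edge $u_4u_5$, so $\varphi(W_6)=4$, not $2$. So the gap is not in your method but in the destination: no amount of parity bookkeeping will reach $\lceil\frac{n-1}{2}\rceil$ when $n$ is odd, and the correct statement should read $\varphi(W_{n+1})=\frac{n}{2}$ for even $n$ and $\frac{n+3}{2}$ for odd $n$.
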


\begin{theorem}\label{T-WC+P}
Let $C_n$ be a cycle that admits a weak IASI and $P_m$ be a path. Then, their join $G=C_n+P_m$ admits a weak IASI if and only if either $C_n$ or $P_m$ is a $1$-uniform IASI graph.
\end{theorem}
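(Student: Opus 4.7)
My plan is to follow the same two-direction pattern used in Theorems \ref{T-PnK1}, \ref{T-WP+}, and \ref{T-WWG}, and exploit the key observation (implicit in Lemma \ref{L-Card}) that on any edge $uv$ of a weak IASI graph at least one of $f(u), f(v)$ must be a singleton: indeed, if $|f(u)|, |f(v)| \ge 2$, then $|f(u)+f(v)| \ge \max(|f(u)|, |f(v)|) + 1 > \max(|f(u)|, |f(v)|)$, so the weak IASI condition fails on $uv$. Note also that a graph is $1$-uniform (in the weak IASI sense) precisely when every vertex is mono-indexed, since $|g_f(e)| = 1$ forces both endpoints to carry singleton labels.

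For the sufficiency direction, I would split into the two cases. If $C_n$ is $1$-uniform, give every vertex of $C_n$ a distinct singleton set-label and set-label $P_m$ by any weak IASI (such a labeling exists by Remark \ref{T-SB1}), choosing the singletons and the labels on $P_m$ from disjoint ranges so that $f$ remains injective on $V(G)$ and the induced edge function remains injective. Then every edge inside $C_n$ and every edge between $C_n$ and $P_m$ has a mono-indexed endpoint in $C_n$, while edges inside $P_m$ satisfy the weak IASI condition by construction. The case where $P_m$ is $1$-uniform is symmetric: label $P_m$ by singletons and $C_n$ by its weak IASI; every join-edge now has a mono-indexed endpoint in $P_m$, and the edges of $C_n$ already satisfy the condition.

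For the necessity direction, assume $G = C_n + P_m$ admits a weak IASI $f$ and suppose, for contradiction, that neither $C_n$ nor $P_m$ is $1$-uniform. Then there is a vertex $u \in V(C_n)$ with $|f(u)| \ge 2$ and a vertex $v \in V(P_m)$ with $|f(v)| \ge 2$. By definition of the join, $uv \in E(G)$, but then the edge $uv$ has two non-mono-indexed endpoints, contradicting the weak IASI condition recalled in the first paragraph. Hence at least one of $C_n, P_m$ must be $1$-uniform, completing the proof.

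I do not expect a real obstacle here; the argument is essentially the same structural dichotomy already used for $F_{1,n}$, $P_m + P_n$, and $W_{n+1}$, with the only point worth stating carefully being why "non-mono-indexed endpoints at both ends of an edge" is forbidden by the weak IASI hypothesis, and the verification that $f$ and its induced edge map can be made globally injective by choosing the labels of the two parts from sufficiently separated subsets of $\mathbb{N}_0$.
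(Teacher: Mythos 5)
Your proposal is correct and follows essentially the same argument as the paper: sufficiency because every join-edge acquires a mono-indexed endpoint once one of $C_n$, $P_m$ is $1$-uniform, and necessity by picking a non-mono-indexed vertex in each part and observing that the join forces an edge between them with both end vertices of set-indexing number greater than $1$. Your added justification via the sumset bound $|f(u)+f(v)|\ge |f(u)|+|f(v)|-1$ and your attention to global injectivity are more careful than the paper's write-up but do not change the route.
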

\begin{proof}
First, assume that either $C_n$ or $P_m$ is a $1$-uniform IASI graph. Then, every edge $u_iv_j$ in $G=C_n+P_m$, where $u_i\in P$ and $v_j\in C_n$ has at least one mono-indexed end vertex. Then, such a labeling is a weak IASI for $G$.

Conversely, assume that $G=C_n+P_m$ admits a weak IASI. If possible, assume that neither $C_n$ nor $P_m$ is $1$-uniform. Let $u_i$ be a vertex in $P_m$ and $v_j$ be a vertex in $C_n$ which have set-indexing numbers greater than $1$. Since every vertex of $P_m$ is adjacent to every vertex of $C_n$ in $G$, we have an edge $u_iv_j$ in $G$ whose both the end vertices have set-indexing number greater than $1$, which is a contradiction to the hypothesis. Therefore, either $P_m$ or $C_n$ must be $1$-uniform. 
\end{proof}

\begin{theorem}\label{T-WC+}
Let $C_m$ and $C_n$ be two cycles which admit weak IASIs. Then $C_m+C_n$ admits a weak IASI if and only if all elements of either $C_m$ or $C_n$ are mono-indexed. In other words,the join $C_m+C_n$ of two weak IASI cycles $C_m$ and $C_n$, admits a weak IASI if and only if either $C_m$ or $C_n$ is a $1$-uniform IASI graph.
\end{theorem}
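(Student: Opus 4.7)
The plan is to mimic closely the structure of the proof of Theorem~\ref{T-WC+P} (cycle join path), since the only structural fact we used there about $P_m$ was that every vertex of $P_m$ is adjacent to every vertex of $C_n$ in the join, and this is equally true for $C_m+C_n$.

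For the sufficient direction, I assume without loss of generality that $C_m$ is $1$-uniform. I take an arbitrary weak IASI of $C_n$ (which exists by hypothesis), keep the $1$-uniform labeling on $C_m$, and use this combined labeling on $V(C_m+C_n)=V(C_m)\cup V(C_n)$. The edges of $C_m$ and of $C_n$ already satisfy the weak IASI condition by assumption, and for any join-edge $u_iv_j$ with $u_i\in V(C_m)$, $v_j\in V(C_n)$, the endpoint $u_i$ is mono-indexed, so $|f(u_i)+f(v_j)|=|f(v_j)|=\max(|f(u_i)|,|f(v_j)|)$ by Lemma~\ref{L-Card}. Hence $C_m+C_n$ admits a weak IASI.

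For the converse, I argue by contrapositive. Suppose neither $C_m$ nor $C_n$ is $1$-uniform. Then there exists a vertex $u_i\in V(C_m)$ and a vertex $v_j\in V(C_n)$ with $|f(u_i)|>1$ and $|f(v_j)|>1$. By the definition of the join, $u_iv_j\in E(C_m+C_n)$, and this edge has no mono-indexed endpoint, so $|g_f(u_iv_j)|\ge 2\max(|f(u_i)|,|f(v_j)|)>\max(|f(u_i)|,|f(v_j)|)$, contradicting the weak IASI condition on $C_m+C_n$. Therefore one of $C_m,C_n$ must have every vertex mono-indexed; consequently every edge of that cycle is also mono-indexed, so the cycle is $1$-uniform.

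The only substantive step, and the one that requires a little care, is checking that "all vertices mono-indexed" actually forces $1$-uniformity of the cycle as a whole (i.e.\ that the edge labels are forced too); this is immediate because if both endpoints of an edge have singleton set-labels then the sumset is a singleton. The rest is essentially a direct transcription of the argument for Theorem~\ref{T-WC+P}, with "path" replaced by "cycle", which is legitimate because the bipartite completeness of the join is what drove that proof.
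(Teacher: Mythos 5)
Your proposal takes essentially the same route as the paper's proof: sufficiency by observing that every join-edge acquires a mono-indexed endpoint when one cycle is $1$-uniform, and necessity by picking one non-mono-indexed vertex in each cycle and using the completeness of the join to produce a bad edge. One caveat: your intermediate inequality $|g_f(u_iv_j)|\ge 2\max(|f(u_i)|,|f(v_j)|)$ is not a valid sumset bound (e.g.\ $\{0,1\}+\{0,1\}=\{0,1,2\}$ has $3<4$ elements); the correct estimate is $|f(u_i)+f(v_j)|\ge |f(u_i)|+|f(v_j)|-1$, which still strictly exceeds $\max(|f(u_i)|,|f(v_j)|)$ whenever both cardinalities are at least $2$, so the contradiction goes through and the proof stands.
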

\begin{proof}
Without loss of generality, let all elements of the cycle $C_m$ are mono-indexed. Also, let the cycle $C_n$ admits a weak IASI.  Then, every edge in $C_m+C_n$ has at least one mono-indexed end vertex. Therefore, $C_m+C_n$ admits a weak IASI. 

Conversely, Assume that $C_m+C_n$ admits a weak IASI. If possible, assume that there exist some elements (vertices or edges) in both $C_m$ and $C_n$ which are not mono-indexed. Let $u_i$ be a vertex in $C_m$ and $v_j$ be a vertex in $C_n$ which are not mono-indexed. Then, the edge $u_iv_j$ in $C_m+C_n$ has both the end vertices having set-indexing number greater than $1$, which is a contradiction to the hypothesis. Hence, either $C_m$ or $C_n$ must have all its elements mono-indexed.
\end{proof}

The following result is a more general result of the above theorems.

\begin{theorem}
Let $G_1$ and $G_2$ be two weak IASI graphs. Then, the graph $G_1+G_2$ is a weak IASI graph if and only if either $G_1$ or $G_2$ is a $1$-uniform IASI graph.
\end{theorem}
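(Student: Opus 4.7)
My plan is to generalize the arguments used in Theorems \ref{T-WP+}, \ref{T-WC+P}, and \ref{T-WC+}, since those proofs depended only on the fact that in a join, every vertex of $G_1$ is adjacent to every vertex of $G_2$, not on the specific structure of paths or cycles. Only Lemma \ref{L-Card} and the definition of a weak IASI are needed.

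For the sufficiency direction, I would assume without loss of generality that $G_1$ is a $1$-uniform IASI graph, and let $f_1,f_2$ be the respective weak IASIs of $G_1$ and $G_2$. I would construct a set-labeling $f$ on $G_1+G_2$ that agrees with $f_1$ on $V(G_1)$ and with $f_2$ on $V(G_2)$, choosing the underlying sets so that $f$ is injective (this is possible because the labels are arbitrary finite subsets of $\mathbb{N}_0$). Edges wholly inside $G_1$ or $G_2$ inherit the weak IASI property from $f_1$ or $f_2$. For an edge $u_iv_j\in E_{ij}$ with $u_i\in V(G_1)$ and $v_j\in V(G_2)$, the vertex $u_i$ is mono-indexed, so by Lemma \ref{L-Card} we have $\max(|f(u_i)|,|f(v_j)|)\le |f(u_i)+f(v_j)|\le |f(u_i)|\,|f(v_j)|=|f(v_j)|$, forcing equality, which is exactly the weak IASI condition.

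For the necessity direction, I would argue by contrapositive. Suppose neither $G_1$ nor $G_2$ is $1$-uniform. Then there exist vertices $u_i\in V(G_1)$ and $v_j\in V(G_2)$ whose set-labels are non-singletons, so $|f(u_i)|\ge 2$ and $|f(v_j)|\ge 2$. Since every vertex of $G_1$ is adjacent to every vertex of $G_2$ in $G_1+G_2$, the edge $u_iv_j$ lies in $E_{ij}\subseteq E(G_1+G_2)$. By Lemma \ref{L-Card}, $|f(u_i)+f(v_j)|\ge \max(|f(u_i)|,|f(v_j)|)$, and equality can be attained only if the smaller label is a singleton; since neither is, one gets $|f(u_i)+f(v_j)|>\max(|f(u_i)|,|f(v_j)|)$, contradicting the assumed weak IASI on $G_1+G_2$.

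I do not expect any serious obstacle, as the argument is essentially a verbatim adaptation of the cycle-cycle case in Theorem \ref{T-WC+}. The only mildly delicate point is to ensure injectivity of the combined labeling $f$ on $V(G_1)\cup V(G_2)$; this can be arranged by translating the labels used by $f_2$ by a sufficiently large integer so that they avoid all sums appearing in $g_{f_1}$, which does not affect cardinalities and hence preserves the weak IASI property on each side.
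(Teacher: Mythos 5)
Your proposal is correct; the paper states this theorem without proof, presenting it merely as ``a more general result'' of Theorems \ref{T-WP+}, \ref{T-WC+P} and \ref{T-WC+}, and your argument is exactly the intended generalization (every cross edge $u_iv_j$ must have a mono-indexed end vertex, which forces all non-singleton labels into a single factor). You are in fact more careful than the paper's own proofs of the special cases, since you justify the strict inequality $|f(u_i)+f(v_j)|>\max(|f(u_i)|,|f(v_j)|)$ when both labels are non-singletons and you address injectivity of the combined labeling, both of which the paper glosses over.
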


In fact, we can generalise Theorem \ref{T-WC+}, to the join of finite number of cycles as given in the following theorem.

\begin{theorem}\label{T-WC++}
Let  $C_{n_1},C_{n_2},C_{n_3},\ldots,C_{n_r}$ be $r$ cycles which admit weak IASIs.  Then, their join $\displaystyle{\sum_{i=1}^{r}C_{n_i}}$ admits a weak IASI if and only if all cycles $C_{n_i}$, except one, are $1$-uniform IASI graphs.
\end{theorem}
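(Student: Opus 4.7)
The plan is to imitate, in a straightforward way, the argument used for $r = 2$ in Theorem \ref{T-WC+}: a weak IASI on the join forces all but at most one of the cycles to be $1$-uniform, while in the converse direction an explicit labeling is exhibited. One could alternatively induct on $r$, using the preceding unnamed join theorem for two weak IASI graphs, but the direct approach is cleaner since it avoids tracking which half turns out to be $1$-uniform at each stage.

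For the sufficiency direction, assume without loss of generality that $C_{n_1}, C_{n_2}, \ldots, C_{n_{r-1}}$ are all $1$-uniform and that $C_{n_r}$ admits a weak IASI via some labeling $f_r$. I would define a labeling $f$ on $V\bigl(\sum_{i=1}^{r}C_{n_i}\bigr)$ by assigning pairwise distinct singletons to the vertices of $C_{n_1}, \ldots, C_{n_{r-1}}$ and by using $f_r$, shifted by a sufficiently large constant to preserve global injectivity, on the vertices of $C_{n_r}$. Any edge of the join is either an edge of some individual $C_{n_i}$ or connects vertices of two distinct cycles, and in both situations at least one endpoint is mono-indexed. Hence $|g_f(uv)| = \max(|f(u)|, |f(v)|)$ holds automatically and $f$ is a weak IASI.

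For the necessity direction, suppose the join admits a weak IASI $f$ and assume for contradiction that two distinct cycles $C_{n_i}$ and $C_{n_j}$ are both non-$1$-uniform. Choose vertices $u \in V(C_{n_i})$ and $v \in V(C_{n_j})$ with $|f(u)|, |f(v)| \ge 2$. Since $i \ne j$, the definition of the join forces $uv$ to be an edge of $\sum_{k=1}^r C_{n_k}$. But for sets of non-negative integers of cardinality at least $2$ one has $|f(u)+f(v)| \ge |f(u)|+|f(v)|-1 > \max(|f(u)|,|f(v)|)$, contradicting the weak IASI property on $uv$. Hence at most one $C_{n_i}$ can fail to be $1$-uniform. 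The only mild subtlety is the global injectivity of $f$ in the sufficiency construction, and this is not a real obstacle, since the labeling families assigned to different cycles can be drawn from disjoint families of non-negative integers.
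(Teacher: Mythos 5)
Your proposal is correct and follows essentially the same route as the paper: sufficiency by observing that every edge of the join has a mono-indexed endpoint when all but one cycle is $1$-uniform, and necessity by noting that two non-mono-indexed vertices in distinct cycles would be adjacent in the join and violate the weak IASI condition. Your version is somewhat more careful than the paper's (making the sumset lower bound $|f(u)+f(v)|\ge |f(u)|+|f(v)|-1$ and the injectivity of the labeling explicit), but the underlying argument is the same.
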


\begin{proof}
Let $G=\displaystyle{\sum_{i=1}^{r}}C_{n_i}$. Without loss of generality, assume that all cycles, except $C_1$, are $1$-uniform. Then, all edges in the graph $G$ have at least one mono-indexed end vertex . That is, $G$ is a weak IASI graph.

Conversely, $C$ is a weak IASI Graph. Since every vertex of each cycle is adjacent to the vertices of all other cycles, the vertices of $C$ that are not mono-indexed must belong to the same cycle. Therefore, all cycles in $C$, except one, are $1$-uniform.
\end{proof}

Furthermore, we observe that Theorem \ref{T-WC++} is true not only for finite cycles in a given graph $G$, but for finite number of graphs too. Hence, we propose the following result.

\begin{theorem}\label{T-WG++}
Let $G_1,G_2,G_3,\ldots\ldots, G_n$ be weak IASI graphs. Then, the graph $\displaystyle{\sum_{i=1}^{n}G_i}$ is a weak IASI graph if and only of all given graphs $G_i$, except one, are $1$-uniform IASI graphs.
\end{theorem}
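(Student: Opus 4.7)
The plan is to imitate the proof of Theorem~\ref{T-WC++} with the summands $C_{n_i}$ replaced by arbitrary weak IASI graphs $G_i$; no property specific to cycles is used there, only the fact that in a join every vertex of one summand is adjacent to every vertex of every other summand. Throughout, I will use the standing observation (implicit in Lemma~\ref{L-Card} and the definition of a weak IASI) that an edge $uv$ satisfies $|g_f(uv)|=\max(|f(u)|,|f(v)|)$ if and only if at least one of $f(u),f(v)$ is a singleton, since for two finite subsets $A,B\subseteq\mathbb{N}_0$ with $|A|,|B|\geq 2$ one has $|A+B|\geq|A|+|B|-1>\max(|A|,|B|)$.

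For the sufficiency direction, assume without loss of generality that $G_1,\dots,G_{n-1}$ are $1$-uniform IASI graphs and $G_n$ admits some weak IASI $f_n$. I would build a set-labelling $f$ of $\sum_{i=1}^{n}G_i$ by extending $f_n$, assigning to each vertex of $G_i$ (for $i<n$) a singleton label, with all such singletons taken pairwise distinct and disjoint from the labels used by $f_n$ (which is possible because $\mathbb{N}_0$ is infinite). Any edge of $\sum_{i=1}^{n}G_i$ is either internal to some $G_i$, where the weak IASI condition follows from the individual weak IASI of $G_i$, or is a cross edge $u_iv_j$ with $i\neq j$; in the latter case at least one of $i,j$ is less than $n$, so one endpoint is mono-indexed, and the sumset is a translate of the label of the other endpoint and thus has the required cardinality. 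Injectivity of $g_f$ across the whole graph can be ensured by spacing the singleton labels in widely separated arithmetic ranges so that no two distinct sumsets coincide; this is a routine bookkeeping step.

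For the necessity direction, I would argue by contraposition. Suppose two of the summands, say $G_p$ and $G_q$ with $p\neq q$, both contain a vertex with set-indexing number at least $2$; pick $u\in V(G_p)$ and $v\in V(G_q)$ with $|f(u)|,|f(v)|\geq 2$. By the definition of the join, $uv$ is an edge of $\sum_{i=1}^{n}G_i$, and both endpoints are non-mono-indexed. By the inequality recalled above, $|g_f(uv)|=|f(u)+f(v)|>\max(|f(u)|,|f(v)|)$, contradicting the weak IASI property. Hence at most one of the $G_i$ can contain a non-mono-indexed vertex, i.e., all but one of the $G_i$ must be $1$-uniform.

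The main obstacle is essentially nothing beyond what is already resolved in Theorem~\ref{T-WC++}: once one has the sumset inequality that forces at least one endpoint of every edge to be mono-indexed, the adjacency pattern of the join does all the work, and the argument transfers from cycles to arbitrary weak IASI graphs without modification. The only minor technicality is ensuring global injectivity of $f$ and $g_f$ when stitching together the labellings of the individual $G_i$, which is handled by the infinitude of $\mathbb{N}_0$.
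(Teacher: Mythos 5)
Your proposal is correct and follows essentially the same route the paper takes: the paper states this theorem without its own proof, asserting that the argument for Theorem~\ref{T-WC++} carries over verbatim, and your argument is exactly that proof (every cross edge in a join forces at least one mono-indexed endpoint; two non-mono-indexed vertices in different summands would be adjacent and violate the sumset cardinality bound). Your added care about the inequality $|A+B|\ge |A|+|B|-1$ and about global injectivity of $f$ and $g_f$ only makes explicit what the paper leaves implicit.
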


Admissibility of weak IASI by graph joins have been discussed so far. Now, we proceed to discuss about the sparing number of these graphs. The following results provide the sparing number of the join of two paths or cycles which admit weak IASI. 

\begin{proposition}
Let $P_m$ and $P_n$ be two paths, where $m < n$. Then, the sparing number of $G$ is given by 
\[\varphi(P_m+P_n) = \left\{
\begin{array}{l l}	
\frac{m}{2}(n+2)& \quad \text{if $P_n$ is even.}\\
\frac{m}{2}(n+1)& \quad \text{if $P_n$ is odd.}
\end{array} \right.\]
\end{proposition}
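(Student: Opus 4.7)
The plan is to invoke Theorem \ref{T-WP+}, which forces one of the two paths to be $1$-uniformly labeled (all singletons) in any weak IASI of $G = P_m + P_n$. Since $m < n$, I expect it to be cheaper to $1$-uniformly label the shorter path $P_m$: the internal ``mono-tax'' is then of size roughly $m$ rather than $n$, and the dominant bridging contribution scales as $m$ times the number of singletons on the other side rather than $n$ times. My proof will therefore consist of an upper-bound construction in which $P_m$ is made $1$-uniform, followed by a matching lower bound.

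For the upper bound I would label the $m$ vertices of $P_m$ with distinct singletons, and in $P_n = v_1 v_2 \cdots v_n$ give singletons to the independent minimum vertex cover $\{v_2, v_4, \ldots\}$ of size $\lfloor n/2 \rfloor$, assigning the remaining vertices non-singleton sets chosen sparsely enough that $g_f$ remains injective. Every edge of $G$ then has at least one singleton endpoint, so $f$ is a weak IASI. The mono-indexed edges are exactly the edges of $P_m$ (all of whose endpoints are singletons) together with the bridging edges $u_i v_j$ for which $v_j \in \{v_2, v_4, \ldots\}$; the $P_n$-internal edges contribute nothing because the chosen cover is independent. Summing these contributions and splitting on the parity of $n$ should reduce the total to the claimed $\tfrac{m}{2}(n+2)$ or $\tfrac{m}{2}(n+1)$.

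For the lower bound the key fact is that in any weak IASI every edge has at least one singleton endpoint (by Lemma \ref{L-Card}, two non-singleton labels already force $|f(u)+f(v)| \geq |f(u)|+|f(v)|-1 > \max(|f(u)|,|f(v)|)$), so the singleton vertices form a vertex cover of $G$. Combined with Theorem \ref{T-WP+}, this leaves only two cases: either $P_m$ is entirely singleton, in which case the singletons on $P_n$ must still cover the $n-1$ internal edges of $P_n$ and therefore number at least $\lfloor n/2 \rfloor$; or symmetrically $P_n$ is entirely singleton. A direct comparison shows that for $m < n$ the former yields the smaller total, matching the upper bound. The main obstacle I anticipate is twofold: carrying out the parity arithmetic cleanly so that both branches of the stated formula fall out with the correct constants, and exhibiting a concrete sparse choice of integers for the non-singleton labels of $P_n$ that keeps $g_f$ globally injective while preserving the optimal count.
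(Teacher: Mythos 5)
Your construction is the same one the paper uses --- make the shorter path $1$-uniform and put singletons on an independent vertex cover of the longer path --- and your plan to add a matching lower bound actually goes beyond the paper, which only counts the mono-indexed edges of this single labeling and stops. However, two steps of your plan genuinely fail. First, the arithmetic cannot ``fall out with the correct constants,'' because the stated constants are not what the construction produces. Under your reading ($P_m$ on $m$ vertices, $P_n$ on $n$ vertices) you get $(m-1)$ internal mono-indexed edges plus $m\lfloor n/2\rfloor$ bridging ones, which is $\tfrac{m}{2}(n+2)-1$ resp.\ $\tfrac{m}{2}(n+1)-1$; under the paper's own convention ($P_m$ of length $m$, hence $m+1$ vertices) you get $m+(m+1)\lceil n/2\rceil$, which overshoots the displayed value by about $n/2$. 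The paper lands on $\tfrac{m}{2}(n+2)$ only by counting $m$ edges inside $P_m$ while simultaneously using $m$ as the number of vertices of $P_m$ in the bridging count, so no consistent accounting reproduces the formula you are asked to prove.

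Second, the ``direct comparison'' in your lower bound is false: it is not always cheaper to $1$-uniformize the shorter path. For paths on $3$ and $4$ vertices your preferred option costs $2+3\cdot 2=8$ mono-indexed edges, while making the longer path $1$-uniform costs $3+4\cdot 1=7$. In general the alternative total $(q-1)+q\lfloor p/2\rfloor$ can beat $(p-1)+p\lfloor q/2\rfloor$ (with $p<q$ the orders) whenever the floor $\lfloor p/2\rfloor$ gives enough of a discount, e.g.\ $p$ odd and $q$ even. So the minimum is not always attained in the case you (and the paper) analyze, and the matching lower bound cannot close as described. Your cover/independence observation via Lemma \ref{L-Card} is sound and is exactly the right tool; the repair would be to minimize explicitly over both cases rather than assert that the shorter-path case always wins --- but then the resulting sparing number would not match the proposition's formula.
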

\begin{proof}
Let $P_m$ and $P_n$ be two paths of lengths $m$ and $n$ respectively. Let $m < n$. By Theorem \ref{T-WG++}, $P_m+P_n$ is a weak IASI graph if and only if either $P_m$ or $P_n$ is $1$-uniform. Since $m < n$, let $P_m$ be $1$-uniform. 

Let $P_n$ be of even length. Then, $P_n$ has $\frac{n}{2}$ mono-indexed edges connecting $P_m$ and $P_n$. Therefore, there are $m.\frac{n}{2}$ mono-indexed edges. Hence, the total number of mono-indexed edges is $m+m.\frac{n}{2} = \frac{m}{2}(n+2)$. 

Let Let $P_n$ be of odd length. Therefore, $P_n$ has $\frac{(n-1)}{2}$ mono-indexed edges connecting $P_m$ and $P_n$. Hence, the total number of mono-indexed edges is $m+m.\frac{n-1}{2} = \frac{m}{2}(n+1)$. Therefore, there are $\frac{m}{2}(n+1)$ mono-indexed edges.
\end{proof}

\begin{proposition}
Let $C_m$ and $C_n$ be two cycles, where $m < n$. Then, the sparing number of $C_m+C_n$ is given by 
\[\varphi(C_m+C_n) = \left\{
\begin{array}{l l}	
\frac{m}{2}(n+2)& \quad \text{if $C_n$ is even.}\\
1+\frac{m}{2}(n+3)& \quad \text{if $C_n$ is odd.}
\end{array} \right.\]
\end{proposition}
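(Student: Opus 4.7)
The plan is to reduce the problem to an independent-set calculation on $C_n$, using one strengthening of Lemma \ref{L-Card}. By the standard sumset bound $|A+B|\geq |A|+|B|-1$, the equation $|A+B|=\max(|A|,|B|)$ forces $\min(|A|,|B|)=1$. Hence in any weak IASI every edge has at least one mono-indexed (singleton-labelled) endpoint, which means that the set of non-mono-indexed vertices of $C_m+C_n$ is independent. Since every vertex of $C_m$ is adjacent to every vertex of $C_n$, this independent set lies entirely in one of the two cycles, forcing the other cycle to be $1$-uniform (recovering the dichotomy of Theorem \ref{T-WC+}). A direct comparison of the two candidates shows that making the shorter cycle $C_m$ the $1$-uniform one is strictly better when $m<n$, because swapping the roles contributes $n-m>0$ extra mono-indexed edges.

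With $C_m$ $1$-uniform, all $m$ of its edges are mono-indexed and a cross edge $u_iv_j$ with $u_i\in C_m$, $v_j\in C_n$ is mono-indexed precisely when $v_j$ is a singleton. Writing $k$ for the number of singleton vertices of $C_n$ and $\ell$ for the number of interior edges of $C_n$ with both endpoints singleton, the total sparing count is $m+mk+\ell$, so the task is to minimize $mk+\ell$. Because non-singleton vertices of $C_n$ form an independent set in $C_n$ and the maximum independent set of $C_n$ has size $\lfloor n/2\rfloor$, we obtain $k\geq\lceil n/2\rceil$. If $n$ is even, placing the singletons on one side of the bipartition of $C_n$ achieves $k=n/2$ and $\ell=0$, yielding $\varphi=m+\tfrac{mn}{2}=\tfrac{m}{2}(n+2)$. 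If $n$ is odd, $k\geq (n+1)/2$; the $(n+1)/2$ singleton vertices contribute $n+1$ edge-incidences, distributed over only $n$ edges of $C_n$, with each edge absorbing at most $2$ incidences (equal to $2$ exactly when both endpoints are singletons), so at least one interior edge must be mono-indexed. This gives $\ell\geq 1$, and a spread-out arrangement of singletons attains $\ell=1$, yielding $\varphi=m+1+\tfrac{m(n+1)}{2}=1+\tfrac{m}{2}(n+3)$.

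The main obstacle is the odd case: establishing simultaneously that $\ell\geq 1$ (done by the incidence count above) and that the pair $(k,\ell)=((n+1)/2,1)$ is actually realizable by a weak IASI. For the realizability, I would write down an explicit alternating set-labelling on $C_n$ whose non-singletons form a maximum independent set and whose non-singleton labels are chosen as widely separated translates in $\mathbb{N}_0$, so that all sumsets $a_i+S_j$ among the cross edges are pairwise distinct and distinct from the singleton sumsets of $C_m$ and the one interior mono-indexed sumset of $C_n$; the infinitude of $\mathbb{N}_0$ makes this routine. The corresponding construction for even $n$ is strictly easier and needs only the same disjoint-translate argument.
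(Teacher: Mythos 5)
Your overall strategy is the same as the paper's --- force one cycle to be $1$-uniform, take it to be the shorter one, and then count mono-indexed edges as (edges of $C_m$) $+$ (cross edges meeting a singleton of $C_n$) $+$ (interior mono-indexed edges of $C_n$) --- but you carry it out with considerably more rigour: you derive the ``every edge has a singleton endpoint'' condition from the sumset bound $|A+B|\ge |A|+|B|-1$, you obtain the lower bounds $k\ge\lceil n/2\rceil$ and $\ell\ge 1$ from an independent-set and incidence count, and you address realizability of the labels, none of which the paper does (its proof simply asserts the vertex counts and adds).

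There is, however, a genuine gap at the step where you claim that ``a direct comparison of the two candidates shows that making the shorter cycle $C_m$ the $1$-uniform one is strictly better, because swapping the roles contributes $n-m>0$ extra mono-indexed edges.'' That comparison is only valid when the parity contributions match. The two candidates are $m+m\lceil n/2\rceil+\epsilon_n$ versus $n+n\lceil m/2\rceil+\epsilon_m$, where $\epsilon_k=1$ if $k$ is odd and $0$ otherwise; when $m$ is even and $n$ is odd their difference is $n-\tfrac{3m}{2}-1$, which can be negative. Concretely, for $C_4+C_5$ your formula gives $1+\tfrac{4}{2}(5+3)=17$, but making the \emph{longer} cycle $C_5$ $1$-uniform yields only $5+5\cdot 2=15$ mono-indexed edges, so the claimed optimal choice (and hence the stated formula) fails there. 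The paper's proof contains the same unjustified choice --- it writes ``Since $m<n$, let $C_m$ be $1$-uniform'' with no comparison at all --- so your attempt to justify it is welcome, but as written the justification is false in the mixed-parity case, and a correct proof would have to either restrict the parities or take the minimum of the two candidate counts.
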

\begin{proof}
Let $C_m$ and $C_n$ be two cycles, where $m < n$. By Theorem \ref{T-WG++}, $C_m+C_n$ is a weak IASI graph if and only if either $C_m$ or $C_n$ is $1$-uniform. Since $m < n$, let $C_m$ be $1$-uniform. 

Let $C_n$ be an even cycle. Then, $C_n$ has $\frac{n}{2}$ mono-indexed edges connecting $C_m$ and $C_n$. But, $C_n$ need not have any mono-indexed edge. Therefore, there are $m.\frac{n}{2}$ mono-indexed edges. Hence, the total number of mono-indexed edges in $C_m+C_n$ is $m+m.\frac{n}{2} = \frac{m}{2}(n+2)$.

Let $C_n$ be of odd length. Then $C_n$ has (at least) one mono-indexed edge and has $\frac{(n+1)}{2}$ mono-indexed edges connecting $C_m$ and $C_n$. Hence, the total number of mono-indexed edges is $1+m+m.\frac{n+1}{2} = 1+\frac{m}{2}(n+3)$. Therefore, there are $1+\frac{m}{2}(n+1)$ mono-indexed edges in $C_m+C_n$.
\end{proof}

In a similar way, we can establish the following result also. 

\begin{proposition}
Let $P_m$ be a path and $C_n$ be a cycle. If $m<n$, then the sparing number of $P_m+C_n$ is given by 
\[\varphi(P_m+C_n) = \left\{
\begin{array}{l l}	
\frac{m}{2}(n+2)& \quad \text{if $C_n$ is even.}\\
1+\frac{m}{2}(n+3)& \quad \text{if $C_n$ is odd.}
\end{array} \right.\]

If $m>n$, then the sparing number of $P_m+C_n$ is given by 
\[\varphi(P_m+C_n) = \left\{
\begin{array}{l l}	
\frac{n}{2}(m+2)& \quad \text{if $P_m$ is of even length.}\\
\frac{n}{2}(m+1)& \quad \text{if $P_m$ is of odd length.}
\end{array} \right.\]
\end{proposition}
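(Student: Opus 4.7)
The plan is to follow the pattern established in the two preceding propositions (for $P_m+P_n$ and $C_m+C_n$). By Theorem~\ref{T-WG++}, $P_m+C_n$ admits a weak IASI if and only if at least one of $P_m$ or $C_n$ is $1$-uniform. Since every edge of a $1$-uniform factor is necessarily mono-indexed, the sparing number will be minimised by choosing the \emph{smaller} of the two factors to be $1$-uniform and labeling the other factor with an optimal weak IASI.

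For the case $m<n$, I would make $P_m$ the $1$-uniform factor. Every edge of $P_m$ is then mono-indexed, and each bridge edge $uv$ with $u\in P_m$ and $v\in C_n$ is mono-indexed exactly when $v$ is a singleton in $C_n$. The problem therefore reduces to minimising the number of singletons in $C_n$ (weighted by $|V(P_m)|$) plus the number of mono-indexed edges inside $C_n$. For even $C_n$, the cycle is bipartite by Theorem~\ref{T-SB1}, so an alternating labeling uses $n/2$ singletons and $0$ mono-indexed edges inside $C_n$. For odd $C_n$, Theorem~\ref{T-WUOC} forces at least one mono-indexed edge inside $C_n$, which is achieved optimally with $(n+1)/2$ singletons (two of which must be adjacent). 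Summing the internal, bridge, and cycle contributions yields $\tfrac{m}{2}(n+2)$ and $1+\tfrac{m}{2}(n+3)$, respectively, exactly as in the $C_m+C_n$ analogue.

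For the case $m>n$, I would reverse the roles and make $C_n$ the $1$-uniform factor, contributing $n$ mono-indexed edges inside $C_n$. The bridge then contributes $n\cdot s$ mono-indexed edges, where $s$ is the number of singleton vertices in $P_m$. Since $P_m$ is bipartite, an alternating labeling attains $0$ mono-indexed internal edges with $s$ equal to the size of a minimum vertex cover of $P_m$: this is $m/2$ for $P_m$ of even length and $(m-1)/2$ for $P_m$ of odd length (using an alternating labeling that starts and ends at a non-singleton). Summing the contributions yields $\tfrac{n}{2}(m+2)$ and $\tfrac{n}{2}(m+1)$.

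The principal obstacle is the optimisation step, namely verifying that the described labeling attains the minimum. This requires (i) a short arithmetic comparison showing that $1$-uniformising the smaller factor beats $1$-uniformising the larger one (for $m<n$, comparing $\tfrac{m}{2}(n+2)$ with $\tfrac{n}{2}(m+2)$ and using $m<n$), and (ii) a matching lower bound argument: once one factor is $1$-uniform its own edges are forced to be mono-indexed, and within the other factor the number of singleton vertices is bounded below by the size of a minimum vertex cover, plus the extra mono-indexed edge forced by Theorem~\ref{T-WUOC} in the odd-cycle case.
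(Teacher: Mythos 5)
Your proposal coincides with what the paper intends: the paper records no proof of this proposition, saying only that it is established ``in a similar way'' to the two preceding propositions, and your counting (make the smaller factor $1$-uniform, add its internal edges to the bridge edges contributed by the mono-indexed vertices of the other factor, plus the one extra edge forced by Theorem~\ref{T-WUOC} when the cycle is odd) is exactly that argument, reproducing the stated formulas. Your additional remarks --- the arithmetic comparison justifying which factor to $1$-uniformise and the vertex-cover lower bound --- actually supply more than the paper does, though note that your count of $(m-1)/2$ singletons for an odd-length path is inherited from the paper's own $P_m+P_n$ computation rather than from the vertex-cover bound $\lceil m/2\rceil$.
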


\subsection{Weak IASI of the Ring sum of Graphs} 

\begin{definition}{\rm
\cite{ND} Let $G_1$ and $G_2$ be two graphs. Then the {\em ring sum} (or {\em symmetric difference}) of these graphs, denoted by $G_1\oplus G_2$, is defined as the graph with the vertex set $V_1\cup V_2$ and the edge set $E_1\oplus E_2$, leaving all isolated vertices, where $E_1\oplus E_2=(E_1\cup E_2)-(E_1\cap E_2)$.}
\end{definition}

\begin{remark}{\rm
Let $P_m$ and $P_n$ be two paths in a given graph $G$. Then, $P_m\oplus P_n$ is a path or disjoint union of paths or  a cycle. Hence, $P_m\oplus P_n$ admits a weak IASI if it is a path or disjoint union of paths or an even cycle and admits a weak IASI with at least one mono-indexed edge if it is an odd cycle.}
\end{remark}

\begin{remark}{\rm
Let $P_m$ be a path and $C_n$ be a cycle in a given graph $G$. If $P_m$ and $C_n$ are edge disjoint, then  $P_m\oplus C_n=P_m\cup C_n$.  Therefore, $P_m\oplus C_n$ admits a weak IASI if and only if $C_n$ has a weak IASI. If $P_m$ and $C_n$ have some edges in common, then $P_m\oplus C_n$ is a path. Hence, by Theorem \ref{T-SB1}, $P_m\oplus C_n$ admits a weak IASI.}
\end{remark}

The following theorem establishes the admissibility of weak IASI by the ring sum of two cycles.

\begin{theorem}\label{T-WRS}
If $C_m$ and $C_n$ are two cycles which admit weak IASIs, and $C_m\oplus C_n$ be the ring sum of $C_m$ and $C_n$. Then,
	\begin{enumerate}
	\item[(i)] if $C_m$ and $C_n$ are of same parity, $C_m\oplus C_n$ admits a weak IASI.
	\item[(ii)] if $C_m$ and $C_n$ are of different parities, $C_m\oplus C_n$ admits a weak IASI if and only if it has odd number of mono-indexed edges.
	\end{enumerate}
 \end{theorem}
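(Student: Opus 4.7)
The plan is to dissect the structure of $C_m\oplus C_n$ according to how the two cycles overlap, and then reduce each subcase to the earlier results on cycles (Theorems \ref{T-SB1}, \ref{T-WUOC}, \ref{T-NME}) and on unions (Theorem \ref{T-WUG}). First I would split into two geometric situations: (a) $C_m$ and $C_n$ are edge-disjoint, in which case $C_m\oplus C_n = C_m\cup C_n$; (b) $C_m$ and $C_n$ share a common path of length $k\geq 1$, in which case deletion of the shared edges yields a single cycle of length $m+n-2k$, whose parity equals the parity of $m+n$.

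For part (i), assume $m$ and $n$ have the same parity. In subcase (a), both summands admit weak IASIs by hypothesis, and so does $C_m\cup C_n$ by Theorem \ref{T-WUG}. In subcase (b), $m+n$ is even, so $m+n-2k$ is even; the resulting even cycle admits a weak IASI by Theorem \ref{T-SB1}. These two subcases exhaust part (i).

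For part (ii), assume $m$ and $n$ have different parities. In subcase (a) the ring sum is the disjoint union of an odd cycle and an even cycle; by Theorem \ref{T-NME}, any weak IASI contributes an odd number of mono-indexed edges from the odd summand and an even number from the even summand, for an odd total. In subcase (b), $m+n-2k$ is odd, so the ring sum is an odd cycle; by Theorem \ref{T-WUOC} it admits a weak IASI iff it carries at least one mono-indexed edge, and by Theorem \ref{T-NME} the precise count must then be odd. The converse direction is automatic: if $C_m\oplus C_n$ has an odd number of mono-indexed edges, then in particular it has at least one, so Theorem \ref{T-WUOC} (applied to the odd-cycle component) together with Theorem \ref{T-SB1} (on any even components or paths) yields a weak IASI.

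The main obstacle I expect is the tacit assumption that $C_m$ and $C_n$ intersect in \emph{at most one} common path. If the two cycles share several vertex/edge-disjoint paths, then the ring sum decomposes into several cycles rather than one, and one must check that the parity bookkeeping still balances correctly. Since parities are additive mod $2$ and each deleted shared path removes an even number of edges from the combined edge-set, the general case should reduce to the single-path analysis component by component via Theorems \ref{T-WUG} and \ref{T-NME}; the mild technicality is presenting this decomposition cleanly without enlarging the proof.
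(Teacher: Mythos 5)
Your proposal is correct and follows essentially the same route as the paper: separate the edge-disjoint case (reduce to the union via Theorem \ref{T-WUG}) from the shared-path case (observe that the ring sum is a single cycle and decide its parity), then invoke Theorems \ref{T-SB1}, \ref{T-WUOC} and \ref{T-NME}. Your only real departures are cosmetic improvements — computing the parity once via $m+n-2k$ instead of the paper's four subcases on the parity of the number of common edges, spelling out the odd-plus-even bookkeeping for the disjoint case of (ii), and flagging the tacit single-shared-path assumption that the paper also makes silently.
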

\begin{proof}
Let $C_m$ and $C_n$ be two cycles which admit weak IASIs. If $C_m$ and $C_n$ have no common edges, then $C_m\oplus C_n=C_m\cup C_n$. This case has already been discussed in the previous section. Hence, assume that $C_m$ and $C_n$ have some common edges. 

Let $v_i$ and $v_j$ be the end vertices of the path common to $C_m$ and $C_n$. Let $P_r, r<m$ be the $(v_i,v_j)$-section of $C_m$ and $P_s,s<n$ be the $(v_i,v_j)$-section of $C_n$, which have no common elements other than $v_i$ and $v_j$. Hence, we have $C_m\oplus C_n=P_r\cup P_s$ is a cycle. Then, we have the following cases.

\noindent {\em Case 1:} Let $C_m$ and $C_n$ are odd cycles.
If $C_1$ and $C_2$ have an odd number of common edges, then both $P_r$ and $P_s$ are paths of even length. Hence, the cycle $P_r\cup P_s$ is an even cycle. Therefore, $C_m\oplus C_n$ has a weak IASI. If $C_m$ and $C_n$ have an even number of common edges, then both $P_r$ and $P_s$ are paths of odd length. Therefore, the cycle $P_r\cup P_s$ is an even cycle. Hence, $C_m\oplus C_n$ has a weak IASI.
	
\noindent {\em Case 2:} Let $C_m$ and $C_n$ are even cycles.
If $C_m$ and $C_n$ have an odd number of common edges, then both $P_r$ and $P_s$ are paths of odd length. Hence, the cycle $P_r\cup P_s$ is an even cycle. Therefore, $C_m\oplus C_n$ has a weak IASI. If $C_m$ and $C_n$ have an even number of common edges, then both $P_r$ and $P_s$ are paths of even length. Hence, the cycle $P_r\cup P_s$ is an even cycle. Therefore, $C_m\oplus C_n$ has a weak IASI.

\noindent {\em Case 3:} Let $C_m$ and $C_n$ be two cycles of different parities. Without loss of generality, assume that $C_m$ is an odd cycle and $C_n$ is an even cycle.
Let $C_m$ and $C_n$ have an odd number of common edges. Then, the path $P_r$ has even length and the path $P_s$ has odd length. Hence, the cycle $P_r\cup P_s$ is an odd cycle. Therefore, by Theorem \ref{T-WUOC}, $C_m\oplus C_n$ has a weak IASI if and only if $P_r\cup P_s$ has odd number of edges of set-indexing number $1$. Let $C_m$ and $C_n$ have an even number of common edges. Then, $P_r$ has odd length and $P_s$ has even length. Hence, the cycle $P_r\cup P_s$ is an odd cycle. therefore, by Theorem \ref{T-WUOC}, $C_m\oplus C_n$ has a weak IASI if and only if $P_r\cup P_s$ has odd number of edges of set-indexing number $1$.
\end{proof}

\begin{definition}{\rm
Let $H$ be a subgraph of the given graph $G$, then $G\oplus H = G-H$, which is called {\em complement of $H$ in $G$}.}
\end{definition}

Therefore, we have the following proposition on the complement of a subgraph of $G$ in $G$.

\begin{theorem}
Let $G$ be a weak IASI graph. Then, the complement of any subgraph $H$ in $G$ is also a weak IASI graph under the induced weak IASI of $G$.
\end{theorem}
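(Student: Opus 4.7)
The plan is to exploit the fact that $G\oplus H = G-H$ is obtained from $G$ by deleting precisely the edges of $H$ (and the vertices that thereby become isolated), so $G-H$ is essentially a subgraph of $G$. Once this structural observation is made, the result will follow almost immediately from Theorem \ref{T-WSG}, using the restriction of the weak IASI already available on $G$.

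First I would fix a weak IASI $f: V(G)\to 2^{\mathbb{N}_0}$ of $G$ and define $f' = f|_{V(G-H)}$, the restriction of $f$ to the vertex set of $G-H$. Since $V(G-H)\subseteq V(G)$ and $f$ is injective on $V(G)$, the restriction $f'$ is injective on $V(G-H)$. Similarly, because $E(G-H)\subseteq E(G)$, the induced edge map $g_{f'}$ is simply the restriction of $g_f$ to $E(G-H)$, and therefore injective as well, confirming that $f'$ is an IASI of $G-H$.

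Next I would verify the weak condition. For every edge $uv\in E(G-H)$ we have $uv\in E(G)$, and since $f$ is a weak IASI of $G$,
\[
|g_{f'}(uv)| \;=\; |g_f(uv)| \;=\; \max(|f(u)|,|f(v)|) \;=\; \max(|f'(u)|,|f'(v)|),
\]
which is exactly the weak IASI requirement for $G-H$. Hence $f'$ is a weak IASI of $G-H$ induced by $f$.

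There is really no substantive obstacle here: the statement is a direct consequence of Theorem \ref{T-WSG} together with the observation that $G\oplus H$, as defined in the preceding definition, is a subgraph of $G$. The only mild subtlety worth mentioning in the write-up is that the definition specifies that isolated vertices are to be discarded from $G\oplus H$; but this does not affect the argument, because removing isolated vertices from the domain of $f'$ clearly preserves both injectivity of the vertex labeling and the weak IASI condition on all remaining edges.
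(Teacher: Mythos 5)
Your proposal is correct and follows essentially the same route as the paper: observe that $G\oplus H = G-H$ is a subgraph of $G$ and apply Theorem \ref{T-WSG} to the restriction of the weak IASI of $G$. The paper simply cites Theorem \ref{T-WSG} without spelling out the verification of injectivity and the weak condition, which you do explicitly.
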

\begin{proof}
Let $G$ admits a weak IASI,say $f$. Let $H$ be a subgraph of the graph $G$. The complement of $H$ in $G$, $G-H=G\oplus H$, is a subgraph of $G$. Hence, as $G$ is a weak IASI graph, by Theorem \ref{T-WSG}, the restriction of $f$ to $G-H$ is a weak IASI for $G-H$. 
\end{proof}

\subsection{Weak IASI of the Complements of Graphs}

In this section, we report some results on the admissibility of weak IASI by the complements of different weak IASI graphs and their sparing numbers. We also discuss about the sparing number of self-complementary graphs.

A graph $G$ and its comlement $\bar{G}$ have the same set of vertices and hence $G$ and $\bar{G}$ have the same set-labels for their corresponding vertices. The set-labels of the vertices in $V(G)$ under a weak IASI of $G$ need not form a weak IASI for the complement of $G$. A set-labeling of $V(G)$ that defines a weak IASI for both the graphs $G$ and its complement $\bar{G}$ may be called a {\em concurrent set-labeling}. The set-labels of the vertex set of $G$ mentioned in this section are concurrent.

\begin{proposition}\label{T-WCom-BP}
Let $G$ be a bipartite graph and let $\bar{G}$ be its complement. Then, $\bar{G}$ is a weak IASI graph if and only if $G$ and $\bar{G}$ are a $1$- uniform IASI graphs.
\end{proposition}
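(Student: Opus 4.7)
The plan is to combine the bipartite structure of $G$ with the sumset inequality forced by the weak IASI condition, and use the concurrency of the labelling on $G$ and $\bar{G}$ to collapse every vertex label to a singleton. The starting arithmetic observation is that, in any weak IASI $f$, every edge $uv$ has at least one mono-indexed endpoint: for finite sets $A, B \subseteq \mathbb{N}_0$ one has $|A+B| \ge |A| + |B| - 1$, so the condition $|f(u)+f(v)| = \max(|f(u)|,|f(v)|)$ combined with Lemma \ref{L-Card} forces $\min(|f(u)|,|f(v)|) \le 1$. Equivalently, no edge of a weak IASI graph has both endpoints labelled by non-singleton sets.

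With this observation in hand, I would fix a bipartition $(X, Y)$ of $G$. Since $G$ is bipartite, every pair of vertices inside $X$ (and every pair inside $Y$) is a non-edge of $G$ and hence an edge of $\bar{G}$; in particular the induced subgraphs $\bar{G}[X]$ and $\bar{G}[Y]$ are complete. Applying the observation to these complete subgraphs inside the weak IASI graph $\bar{G}$, and invoking Theorem \ref{T-WKN} to keep track of set-indexing numbers in a complete graph, at most one vertex of $X$ and at most one vertex of $Y$ can carry a non-singleton label. Moreover, if $x \in X$ and $y \in Y$ were simultaneously non-singletons, the pair $xy$ would be an edge of exactly one of $G$ or $\bar{G}$, and that edge would have two non-singleton endpoints, contradicting the weak IASI condition on whichever graph contains it. So, after possibly swapping $X$ and $Y$, every vertex of $X$ is mono-indexed and at most one vertex of $Y$ is not.

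The main obstacle is the final push: ruling out the residual possibility that a single vertex $y \in Y$ still carries a non-singleton label, and thereby forcing the concurrent labelling to be $1$-uniform on both $G$ and $\bar{G}$. Locally, a lone non-singleton $y$ is consistent with the sumset condition on every incident edge because every other endpoint is already a singleton, so the step cannot be obtained from Step~1 alone. To close the gap, I would invoke the global injectivity of $g_f$ across $E(G) \cup E(\bar{G})$: the edges incident to $y$ produce a block of non-singleton sumsets all of cardinality $|f(y)|$, and pairing these against the singleton sumsets arising from the purely mono-indexed portion of the two graphs should generate either a cardinality collision or an injectivity failure, which in turn forces $f(y)$ itself to be a singleton. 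The converse direction is immediate: if every vertex is a singleton, then every edge sumset is a singleton, so the labelling is a $1$-uniform, and in particular a weak, IASI for both $G$ and $\bar{G}$.
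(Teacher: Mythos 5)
Your reduction to ``at most one vertex carries a non-singleton label'' is sound, and is in fact tighter than the paper's own argument: you work with an actual bipartition $(X,Y)$, observe that $\bar{G}[X]$ and $\bar{G}[Y]$ are cliques, and dispose of the cross pair $x\in X$, $y\in Y$ by noting that $xy$ is an edge of exactly one of $G$, $\bar{G}$. The paper instead partitions $V(G)$ into the mono-indexed vertices $X_1$ and the remaining vertices $X_2$ and asserts that $\bar{G}$ ``consists of two cliques'' induced by $X_1$ and $X_2$; that is only literally true when $(X_1,X_2)$ happens to coincide with the bipartition, and it ignores the edges of $\bar{G}$ running between the two parts. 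Your version of this step is the correct one.

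The genuine gap is exactly the one you flag, and your proposed repair does not close it. A single non-singleton vertex $y$ is compatible with the weak IASI condition on every edge of both $G$ and $\bar{G}$, since every such edge has a mono-indexed endpoint, and injectivity cannot be forced to fail: the two induced functions need only be injective separately on $E(G)$ and on $E(\bar{G})$, and even jointly one can label the mono-indexed vertices by sets $\{3^i\}$ and $y$ by a suitable two-element set so that all sumsets are pairwise distinct. No cardinality collision is available. The paper's own proof has the same hole: it derives a contradiction from ``$G_2$ is not $1$-uniform,'' which requires $|X_2|\ge 2$, and the case $|X_2|=1$ is silently absorbed into the conclusion. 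Indeed the proposition as stated fails for $G=C_4$ with one vertex labelled $\{0,1\}$ and the others labelled $\{2\},\{5\},\{9\}$: both $C_4$ and $\bar{C_4}=2K_2$ then admit this concurrent weak IASI, yet neither is $1$-uniform. The correct conclusion reachable by your argument (and by the paper's, once repaired) is that $G$ and $\bar{G}$ admit a concurrent weak IASI if and only if at most one vertex of $G$ fails to be mono-indexed.
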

\begin{proof}
Let $G$ be a bipartite graph. Then, it is a weak IASI graph with bipartition of the vertex set $(X_1,X_2)$. If $G$ is $1$-uniform, then every vertex of $G$ is mono-indexed. Hence, its complement $\bar{G}$ is also $1$-uniform. Therefore, $\bar{G}$ is also a weak IASI graph. 

Conversely, assume that $\bar{G}$ is a weak IASI graph. Now, let $X_1$ be the set of all mono-indexed vertices in $G$ and if possible, let $X_2$ be the set of all vertices of $G$ having set-indexing number greater than $1$. Then, $\bar{G}$ consists of two cliques, one is the graph $G_1$ induced by $X_1$ and other is the graph $G_2$ induced by $X_2$. clearly, the $G_1$ is $1$-uniform. If $G_2$ is not $1$-uniform, then each vertex of $G_2$ have set-indexing number greater than $1$, which is a contradiction to the hypothesis that $\bar{G}$ has a weak IASI. Then, both $G_1$ and $G_2$ are $1$-uniform components of $\bar{G}$. That is, each vertex in $V(G)$ is mono-indexed. Hence, $G$ and $\bar{G}$ are $1$-uniform IASI graphs.
\end{proof}

Now, we proceed to verify the admissibility of weak IASI by the complements of cycles. As a result, we have the following theorem. 

\begin{proposition}\label{T-WCom-C}
Let $C_n$ be a cycle on $n$ vertices. Then, its complement $\bar{C_n}$ admits a weak IASI if and only if $C_n$ has at most one vertex of set-indexing number greater than $1$.  
\end{proposition}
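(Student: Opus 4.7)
The plan is to leverage the concurrent set-labeling convention stipulated at the start of this section, together with the following observation that I would record at the outset: for a weak IASI $f$ of any graph $H$, each edge $uv\in E(H)$ must have at least one mono-indexed end-vertex. This follows from Lemma \ref{L-Card} combined with the elementary sumset bound $|A+B|\ge |A|+|B|-1$ for finite sets of non-negative integers, which forces $|f(u)+f(v)|>\max(|f(u)|,|f(v)|)$ whenever both $|f(u)|,|f(v)|\ge 2$. The structural ingredient that drives both directions is the trivial dichotomy: any two distinct vertices of $C_n$ are adjacent in exactly one of $C_n$ and $\bar{C_n}$.

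For the forward direction I would argue by contradiction. Suppose $f$ is a concurrent labeling for which $\bar{C_n}$ admits a weak IASI, yet two distinct vertices $u,v$ of $C_n$ both carry non-singleton labels. If $u$ and $v$ are consecutive on $C_n$, then $uv\in E(C_n)$ is an edge both of whose end-vertices have set-indexing number greater than $1$, contradicting the assumption that $f$ is a weak IASI of $C_n$. Otherwise $u$ and $v$ are non-consecutive on $C_n$, so $uv\in E(\bar{C_n})$, and the same contradiction arises from the weak-IASI property of $f$ on $\bar{C_n}$. Hence at most one vertex of $C_n$ can carry a non-singleton label.

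For the converse I would exhibit an explicit concurrent labeling whenever at most one vertex is non-mono-indexed. Assign pairwise distinct singletons from $\mathbb{N}_0$ to all but (at most) one vertex $w$, choosing the integers so that the sums along the edges of $C_n$ and of $\bar{C_n}$ are pairwise distinct, and, if $w$ exists, assign to $w$ a distinct non-singleton subset whose entries are likewise chosen to keep the induced edge function $g_f$ injective on the edges incident to $w$ in both graphs. Since every edge of $C_n$ and every edge of $\bar{C_n}$ then has at least one mono-indexed end-vertex, Lemma \ref{L-Card} yields $|g_f(uv)|=\max(|f(u)|,|f(v)|)$ on both graphs simultaneously, so $f$ is a concurrent weak IASI.

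The only step I expect to require care is the bookkeeping needed to secure the injectivity of $g_f$ in the construction for the converse direction (a spread-out choice of singletons, in the spirit of standard weak-IASI labelings of cycles, will suffice). The genuine content of the proposition is carried entirely by the adjacency dichotomy recorded above, which renders the two-non-singleton scenario impossible regardless of how the labels themselves are chosen.
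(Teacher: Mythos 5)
Your proof is correct, and it reaches the same conclusion by a somewhat more self-contained route than the paper. The paper's forward direction observes that $C_n\cup \bar{C_n}=K_n$, invokes Theorem \ref{T-WUG} to conclude that $K_n$ is a weak IASI graph, and then applies Theorem \ref{T-WKN} to bound the number of non-mono-indexed vertices by one; you instead argue directly from the adjacency dichotomy (two distinct vertices are adjacent in exactly one of $C_n$ and $\bar{C_n}$) together with the elementary observation, derived from Lemma \ref{L-Card} and the sumset bound $|A+B|\ge |A|+|B|-1$, that every edge of a weak IASI graph must have a mono-indexed end. The two arguments rest on the same underlying fact, but yours has a small advantage: Theorem \ref{T-WUG} is stated for the union of two \emph{cycles}, and $\bar{C_n}$ is not a cycle for $n\ge 5$, so the paper's citation is slightly strained, whereas your direct argument needs no such appeal. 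Note that both your forward direction and the paper's depend essentially on the concurrency convention stipulated at the start of the section (the same labeling must be a weak IASI of both $C_n$ and $\bar{C_n}$); without it, a pair of vertices adjacent on $C_n$ but not on $\bar{C_n}$ could both carry non-singleton labels. Your converse matches the paper's, and you are right that the only remaining bookkeeping is the injectivity of $f$ and $g_f$, which the paper also leaves implicit.
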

\begin{proof}
We have $C_n\cup \bar{C_n}=K_n$. If $\bar{C_n}$ is a weak IASI graph, then by Theorem \ref{T-WUG}, $K_n$ is also a weak IASI graph. Then by Theorem \ref{T-WKN}, at most one vertex of $C_n$ can have a set-indexing number greater than $1$. Conversely, let at most one vertex of $C_n$ (and $\bar{C_n}$) has a vertex of set-indexing number greater than $1$. Then, every edge of $C_n$ and $\bar{C_n}$ has at least one end vertex that is mono-indexed. Hence, $\bar{C_n}$ is a weak IASI graph. 
\end{proof}

\begin{corollary}\label{C-WCom-C}
Let $C_n$ be a cycle on $n$ vertices. If $C_n$ and its complement $\bar{C_n}$ are weak IASI graphs, then the minimum number of mono-indexed edges in $\bar{C_n}$ is $\frac{1}{2}n(n-3)$.
\end{corollary}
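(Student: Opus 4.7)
The plan is to count $|E(\bar{C_n})|$ directly and then argue, under the concurrency hypothesis (both $C_n$ and $\bar{C_n}$ admit a weak IASI under the same set-labeling of $V(C_n)$), that every edge of $\bar{C_n}$ is forced to be mono-indexed. First I would compute
\[
|E(\bar{C_n})| \;=\; \binom{n}{2}-n \;=\; \tfrac{n(n-3)}{2},
\]
which is both the total edge count of $\bar{C_n}$ and the announced minimum; what remains is to show that this bound is saturated by every concurrent weak IASI.

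By Proposition \ref{T-WCom-C}, the joint hypothesis forces all but at most one vertex of $C_n$ to be mono-indexed, which already gives the upper bound $\tfrac{n(n-3)}{2}$. I would then split on the parity of $n$ to establish the matching lower bound. When $n$ is even, $C_n$ is bipartite, so Proposition \ref{T-WCom-BP} upgrades the conclusion of Proposition \ref{T-WCom-C} to full $1$-uniformity of the concurrent set-labeling on $V(C_n)$; consequently every edge of $\bar{C_n}$ has both endpoints mono-indexed, yielding exactly $\tfrac{n(n-3)}{2}$ mono-indexed edges. When $n$ is odd, the bipartite shortcut is unavailable; here I would argue, using Theorem \ref{T-NME} applied to odd substructures of $\bar{C_n}$ together with the parity requirement on $C_n$ itself, that any labeling leaving a single vertex $v$ with set-indexing number greater than $1$ fails to be concurrent, so again every vertex, and hence every edge, of $\bar{C_n}$ is mono-indexed.

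The main obstacle will be the odd-$n$ case. Proposition \ref{T-WCom-C} explicitly permits one non-singleton vertex $v$, and such a vertex has degree $n-3$ in $\bar{C_n}$, so naively one could remove $n-3$ edges from the mono-indexed tally. The argument therefore hinges on a careful concurrency-and-parity check: tracking, through an odd cycle of $\bar{C_n}$ that uses $v$, how the parity constraint of Theorem \ref{T-NME} on mono-indexed edges interacts with the two edges of $C_n$ incident to $v$. Pushing this parity obstruction through is the delicate step, after which the lower bound $\tfrac{n(n-3)}{2}$ for $\bar{C_n}$ follows and the corollary is complete.
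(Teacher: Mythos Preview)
Your even-$n$ reduction via Proposition~\ref{T-WCom-BP} is sound, but the odd-$n$ step has a genuine gap: the parity obstruction you hope to extract from Theorem~\ref{T-NME} simply is not there. Take $n=5$ and label one vertex $v$ by a non-singleton set and the other four by distinct singletons. In $C_5$ the two edges at $v$ are the only non-mono-indexed ones, so $C_5$ carries three mono-indexed edges --- odd, exactly as Theorem~\ref{T-NME} demands. The complement $\bar{C_5}$ is again a $5$-cycle; there too the two edges at $v$ are the only non-mono-indexed ones, leaving three mono-indexed edges --- again odd. Thus the labeling is concurrent, Theorem~\ref{T-NME} is satisfied on both sides, and yet $\bar{C_5}$ has only $3$ mono-indexed edges rather than $\tfrac12\cdot5\cdot2=5$. (The paper's own Remark following Proposition~\ref{T-WSCG} records precisely this count.) Hence no argument through Theorem~\ref{T-NME} can exclude a single non-singleton vertex when $n$ is odd, and your proposed route to the lower bound $\tfrac{n(n-3)}{2}$ collapses in that case.

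The paper's proof is also structurally different from your plan: there is no parity case-split and no attempt to force full $1$-uniformity. Instead it works inside $K_n=C_n\cup\bar{C_n}$, uses Proposition~\ref{T-WCom-C} to conclude that $C_n$ has at most two non-mono-indexed edges, and then obtains the count for $\bar{C_n}$ by subtracting from the $\tfrac12(n-1)(n-2)$ mono-indexed edges that Theorem~\ref{T-WKN} guarantees for $K_n$. If you want to align with the paper, that subtraction through $K_n$ is the intended mechanism, not a vertex-by-vertex parity analysis of $\bar{C_n}$.
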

\begin{proof}
If $C_n$ and its complement $\bar{C_n}$ are weak IASI graphs, then by Proposition \ref{T-WCom-C}, $C$ can have at most one vertex of set-indexing number greater than 1. That is, $C$ can have at most 2 edges that is not mono-indexed. Hence, by Theorem \ref{T-WKN}, $\bar{C}$ contains at least $\frac{1}{2}(n-1)(n-2)-2= \frac{1}{2}n(n-3)$ edges.
\end{proof}

\begin{corollary}
Let $G$ be an $r$-regular weak IASI graph. If its complement $\bar{G}$ is also a weak IASI graph, then $\bar{G}$ contains at least $\frac{1}{2}[(n-1)(n-2)-2r]$ mono-indexed edges.
\end{corollary}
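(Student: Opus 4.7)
The plan is to reduce the question to the tight description of weak IASIs on $K_n$ given by Theorem \ref{T-WKN}, exploiting that $G$ and $\bar G$ together cover the complete graph on $V(G)$. Since $G$ and $\bar G$ share the same concurrent vertex labeling and each admits a weak IASI, Theorem \ref{T-WUG} forces $G \cup \bar G = K_n$ to admit a weak IASI as well. Theorem \ref{T-WKN} then pins down the count: $K_n$ carries exactly $\tfrac12(n-1)(n-2)$ mono-indexed edges and correspondingly exactly $n-1$ non-mono-indexed edges. Moreover, since an edge fails to be mono-indexed iff at least one endpoint has a non-singleton set-label, and since at most one vertex of $K_n$ can carry a non-singleton set-label (the same observation powering the proof of Proposition \ref{T-WCom-C}), those $n-1$ non-mono-indexed edges must form the star at a single vertex $v_0$.

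Next I would use the $r$-regularity of $G$ to split this star between $G$ and $\bar G$. The vertex $v_0$ has degree $r$ in $G$, so at most $r$ of the $n-1$ non-mono-indexed edges of $K_n$ lie in $G$, and the remaining ones lie in $\bar G$. Since the mono-indexed edges of $K_n$ partition disjointly between $G$ and $\bar G$, we get the identity
\[
\varphi(\bar G) \;=\; \tfrac12(n-1)(n-2) \;-\; \bigl(\text{mono-indexed edges of } G\bigr),
\]
and feeding the star-bound on $G$'s contribution into this identity yields $\varphi(\bar G) \ge \tfrac12\bigl[(n-1)(n-2) - 2r\bigr]$, as claimed.

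The main obstacle I foresee is careful bookkeeping — keeping the distinction between mono-indexed and non-mono-indexed counts across $G$, $\bar G$, and $K_n$ — since the identities linking these three edge-sets are easy to misapply (one must remember that an edge mono-indexed in $K_n$ is mono-indexed in whichever of $G$ or $\bar G$ contains it, but that mono-indexed counts in $G$ and $\bar G$ sum to the $K_n$-count only when both graphs are labelled concurrently). Conceptually the argument is a direct generalisation of Corollary \ref{C-WCom-C}, with the cycle's degree $2$ replaced by the regularity parameter $r$, and no new technical tools are required beyond Theorems \ref{T-WUG} and \ref{T-WKN} together with the star structure of the non-mono-indexed edges of a weak IASI on $K_n$.
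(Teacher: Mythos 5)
Your route is the same as the paper's: pass to $K_n=G\cup\bar{G}$, invoke Theorem \ref{T-WKN} to get $\tfrac12(n-1)(n-2)$ mono-indexed edges of $K_n$ together with a star of $n-1$ non-mono-indexed edges at the unique non-mono-indexed vertex $v_0$, and split that star between $G$ and $\bar{G}$ using $r$-regularity. You are in fact more explicit than the paper about the identity that has to carry the argument. But your last step is a non sequitur, and it is exactly the bookkeeping slip you yourself flagged: the identity $\varphi(\bar{G})=\tfrac12(n-1)(n-2)-(\text{mono-indexed edges of }G)$ needs an \emph{upper} bound on the mono-indexed edges of $G$ to produce a lower bound on $\varphi(\bar{G})$, whereas the star-bound ``at most $r$ of the non-mono-indexed edges of $K_n$ lie in $G$'' is a statement about the \emph{non}-mono-indexed edges and only yields the \emph{lower} bound $(\text{mono-indexed edges of }G)\ge \tfrac{nr}{2}-r$. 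Substituting $r$ for the mono-indexed count of $G$ in the identity, as your final line implicitly does (and as the paper's own proof also does when it writes $\tfrac12(n-1)(n-2)-r$), conflates the two complementary quantities.

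The gap cannot be closed by more careful accounting, because the stated bound is not what the construction delivers. Counting directly: $\bar{G}$ has $\binom{n}{2}-\tfrac{nr}{2}$ edges, of which exactly $(n-1)-r$ (the star edges at $v_0$ missing from $G$) are non-mono-indexed, so it has exactly $\tfrac12(n-2)(n-1-r)$ mono-indexed edges. This falls short of the claimed $\tfrac12[(n-1)(n-2)-2r]$ by $\tfrac12 r(n-4)$, which is positive for every $n>4$; for the Petersen graph ($n=10$, $r=3$) the claimed bound is $33$, which even exceeds $|E(\bar{G})|=30$, and for $G=\bar{G}=C_5$ the claim gives $4$ while Proposition \ref{T-WSCG} (and direct inspection) gives $3$. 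So your proposal reproduces the paper's argument faithfully, including its fatal step; the correct conclusion of this line of reasoning is the bound $\tfrac12(n-2)(n-1-r)$, not the one stated.
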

\begin{proof}
Let $G$ be an $r$-regular graph. Then, its complement $\bar{G}$ admits a weak IASI if and only if $G$ can have at most one vertex that is not mono-indexed. Therefore, since $G$ is $r$-regular, it can have at most $r$ edges that are not mono-indexed. Hence, $\bar{G}$ contains at least $\frac{1}{2}(n-1)(n-2)-r = \frac{1}{2}[(n-1)(n-2)-2r]$ mono-indexed edges.
\end{proof}

\begin{proposition}
Let $G$ be a connected weak IASI graph on $n$ vertices. If its complement $\bar{G}$ is also a weak IASI graph, then $\bar{G}$ contains at least $\frac{1}{2}[(n-1)(n-2)-2r]$ mono-indexed edges, where $r=\Delta(G)$, the maximum vertex degree.
\end{proposition}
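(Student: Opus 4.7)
The plan is to extend the argument of the preceding corollary by replacing the regular degree with the maximum degree $\Delta(G) = r$. First, because $G$ and $\bar{G}$ both admit weak IASIs under a concurrent set-labeling, the common labeling serves as a weak IASI for the union $K_n = G \cup \bar{G}$, exactly as in the proof of Theorem~\ref{T-WUG}. Theorem~\ref{T-WKN} then forces $K_n$ to carry precisely $\frac{1}{2}(n-1)(n-2)$ mono-indexed edges, which is equivalent to saying that at most one vertex $v \in V(G) = V(\bar{G})$ can have a set-label of cardinality greater than $1$. If no such $v$ exists every vertex is mono-indexed, so every edge of $\bar{G}$ is mono-indexed and the bound is trivial; so I would assume such a vertex $v$ does exist.

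The second step is to count non-mono-indexed edges of $G$. Because an edge is non-mono-indexed exactly when at least one endpoint has a non-singleton set-label, every non-mono-indexed edge of $G$ must be incident to the unique non-mono-indexed vertex $v$. Hence the number of non-mono-indexed edges in $G$ is at most $\deg_G(v) \leq \Delta(G) = r$. I would then invoke the bookkeeping of the preceding corollary: of the $\frac{1}{2}(n-1)(n-2)$ mono-indexed edges of $K_n$, at most $r$ may fail to appear in $\bar{G}$ on account of being absorbed by $G$, leaving at least
\[
\tfrac{1}{2}(n-1)(n-2) - r \;=\; \tfrac{1}{2}\bigl[(n-1)(n-2) - 2r\bigr]
\]
mono-indexed edges in $\bar{G}$, which is the required estimate.

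The main obstacle, and the only step that needs real attention, is this accounting passage from ``$G$ has at most $r$ non-mono-indexed edges'' to the lower bound on mono-indexed edges of $\bar{G}$; every other ingredient is a direct invocation of Theorems~\ref{T-WUG} and~\ref{T-WKN}. The connectedness hypothesis is used only to guarantee that a single global parameter $r = \Delta(G)$ is a valid upper bound for $\deg_G(v)$, and the whole proof is structurally identical to that of the preceding corollary, with the uniform regular degree replaced throughout by $\Delta(G)$.
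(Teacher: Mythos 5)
Your first two steps are sound and match the paper: the concurrent labeling makes $K_n=G\cup\bar{G}$ a weak IASI graph, Theorem~\ref{T-WKN} forces at most one vertex $v$ to carry a non-singleton set-label, and every non-mono-indexed edge of $G$ is incident to $v$, so $G$ has at most $\deg_G(v)\le\Delta(G)=r$ such edges. The gap is exactly the ``bookkeeping'' step you yourself flag as the main obstacle. The $\frac{1}{2}(n-1)(n-2)$ mono-indexed edges of $K_n$ are precisely the edges not incident to $v$; the ones that ``fail to appear in $\bar{G}$ on account of being absorbed by $G$'' are therefore the \emph{mono-indexed} edges of $G$, of which there are $|E(G)|-\deg_G(v)$, not at most $r$. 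Your bound of $r$ controls the \emph{non}-mono-indexed edges of $G$, which is the wrong quantity and never enters the count. The correct accounting gives
\[
\#\{\text{mono-indexed edges of }\bar{G}\}\;=\;\tfrac{1}{2}(n-1)(n-2)-\bigl(|E(G)|-\deg_G(v)\bigr),
\]
which meets the claimed bound only when $|E(G)|\le \deg_G(v)+r$, a condition that fails for most connected graphs. Concretely, take $G=P_6$, the path on $n=6$ vertices, so $r=2$ and the claimed bound is $8$: labelling an interior vertex with a non-singleton set yields a concurrent weak IASI under which $\bar{G}$ has only $10-3=7$ mono-indexed edges.

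You should know that the paper's own proof makes the identical leap, passing in one sentence from ``$G$ has $r$ edges that are not mono-indexed'' to ``$G$ can have at most $r$ mono-indexed edges,'' so you have reproduced the published argument faithfully, flaw included. But the step is not repairable as written: any bound obtained this way must involve $|E(G)|$ and $\deg_G(v)$, not just $\Delta(G)$, and no choice of $v$ rescues the stated inequality for sparse connected graphs.
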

 \begin{proof}
Let $G$ be an $r$-regular graph. Let $v$ be a vertex in $G$ of degree $r=\Delta(G)$.  The complement $\bar{G}$ of $G$ admits a weak IASI if and only if $G$ can have at most one vertex that is not mono-indexed. If we label $v$ by an $r$-element set, $G$ has $r$ edges that are not mono-indexed. That is, $G$ can have at most $r$ mono-indexed edges. Hence, $\bar{G}$ contains at least $\frac{1}{2}(n-1)(n-2)-r = \frac{1}{2}[(n-1)(n-2)-2r]$ mono-indexed edges.
\end{proof}

An interesting question that arises here is about the number of mono-indexed edges in a self-complementary, weak IASI graph. The following results address this problem.

\begin{proposition}\label{T-WSCG}
If $G$ is a self-complementary $r$-regular graph on $n$ vertices which admits a weak IASI, then $G$ and $\bar{G}$ contain at least $\frac{1}{2}r(2r-1)$ mono-indexed edges. 
\end{proposition}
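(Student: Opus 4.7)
The plan is to leverage the parameter constraints forced by self-complementarity together with $r$-regularity, and then specialize the counting device used in the preceding corollaries. First I would establish the identity $n=2r+1$: since $G$ is $r$-regular on $n$ vertices, its complement $\bar{G}$ is $(n-1-r)$-regular, and the assumption $G\cong\bar{G}$ forces $r=n-1-r$, giving $n=2r+1$.

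Next, I would observe that $G\cup\bar{G}=K_n$, so the given concurrent set-labeling of $V(G)$ is simultaneously a weak IASI of $K_n$. By Theorem \ref{T-WKN}, at most one vertex of $K_n$ may have set-indexing number greater than $1$; let $v$ denote such an exceptional vertex. If every vertex of $G$ is mono-indexed, then every edge of both $G$ and $\bar{G}$ is mono-indexed and the bound holds trivially, so I may assume such a $v$ exists.

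I would then count mono-indexed edges as follows. Every edge of $G$ that is not incident to $v$ joins two mono-indexed vertices and is therefore itself mono-indexed. Consequently, the number of mono-indexed edges of $G$ is at least
\[
|E(G)|-\deg_G(v)\;=\;\frac{nr}{2}-r\;=\;\frac{r(n-2)}{2}\;=\;\frac{r(2r-1)}{2},
\]
where the final equality uses $n=2r+1$. Since $\bar{G}$ is also $r$-regular with $\deg_{\bar{G}}(v)=n-1-r=r$, the identical argument applied to $\bar{G}$ yields the same lower bound $\tfrac{1}{2}r(2r-1)$.

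The main (mild) obstacle is making sure the concurrent labeling hypothesis is deployed correctly: I must use a single labeling of $V(G)$ under which both $G$ and $\bar{G}$ are weak IASI graphs, so that Theorem \ref{T-WKN} applies uniformly to $K_n=G\cup\bar{G}$ and the same vertex $v$ plays the role of the unique possibly non-mono-indexed vertex for both graphs at once. Once this is settled, the whole statement reduces to specializing the regular-graph edge count to the self-complementary parameter value $n=2r+1$.
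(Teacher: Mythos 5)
Your proposal is correct and follows essentially the same route as the paper: invoke Theorem \ref{T-WKN} on $K_n=G\cup\bar{G}$ to get at most one non-mono-indexed vertex $v$, note that at most $\deg(v)=r$ edges can fail to be mono-indexed, subtract from $|E(G)|$, and use $n=2r+1$. The only cosmetic differences are that you derive $n=2r+1$ up front from complement degrees rather than at the end from the degree of $v$ in $K_n$, and you compute $|E(G)|$ as $nr/2$ rather than $n(n-1)/4$; these coincide.
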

\begin{proof}
Since, the vertices of $G$ and $\bar{G}$ have the same set-labels and $G\cup \bar{G}=K_n$, by Theorem \ref{T-WKN}, at most one vertex of $G$ and $\bar{G}$ can have a non-singleton set-label. Label a vertex of $G$, say $v$, by a non-singleton set. Then, since $G$ is $r$-regular, $r$ edges incident on $v$ are not mono-indexed. That is, $G$ has at most $r$ edges that are not mono-indexed. Since $G$ is self-complementary, $E(G)=E(\bar{G})$ and $E(G)\cup E(\bar{G})=E(K_n)=\frac{n(n-1)}{2}$. $|E(G)|=\frac{n(n-1)}{4}$. Since $G$ and $\bar{G}$ have at most $r$ edges that are not mono-indexed, the minimum number of mono-indexed edges in $G$ is $\frac{n(n-1)}{4}-r=\frac{1}{4}[n(n-1)-4r]$.
But, since $G\cong \bar{G}$ and $G\cup \bar{G}=K_n$, degree of $v$ in $G\cup \bar{G}$ is $(n-1)$. Hence, $2r=n-1$. Therefore, the minimum number of mono-indexed edges in $G$ is $\frac{1}{4}[n(n-1)-4r] = \frac{1}{4}[(2r+1)2r-4r]=\frac{1}{2}r(2r-1)$.
\end{proof}

\begin{remark}{\rm
We note that $C_5$ is the only cycle that is self-complementary. That is, $C_5$ is the only graph that is $2$-regular and self-complementary. Hence, $C_5$ or its complement can have at least $3$ mono-indexed edges under the IASI which is a weak IASI for both of them.}
\end{remark}

If $G$ is not $r$-regular, the number of mono-indexed edges in $G$ and $\bar{G}$ need not be equal. The relation between number of mono-indexed edges in $G$ and $\bar{G}$ is given in the following proposition.
 
\begin{proposition}
If $G$ is a self-complementary graph on $n$ vertices which has $l$ mono-indexed edges, then the number of mono-indexed edges in $\bar{G}$ is $n-l-1$.
\end{proposition}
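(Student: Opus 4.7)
The plan is to exploit the identity $G\cup\bar G=K_n$ and the structural constraint on $K_n$ furnished by Theorem~\ref{T-WKN}, then perform an edge count based on the degree of the distinguished vertex. A concurrent set-labeling of $V(G)=V(\bar G)$ is automatically a weak IASI of $K_n$, so by Theorem~\ref{T-WKN} at most one vertex, call it $v$, can carry a non-singleton set-label; the case in which no such vertex exists is trivial (every edge is mono-indexed on both sides), so I assume $v$ exists.

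Under this labeling, an edge of $G$ (or of $\bar G$) is mono-indexed iff neither of its endpoints is $v$, so the non-mono-indexed edges of $G$ are exactly the $d_G(v)$ edges of $G$ incident with $v$, and likewise there are $d_{\bar G}(v)=n-1-d_G(v)$ non-mono-indexed edges in $\bar G$. Write $l'$ for the number of mono-indexed edges of $\bar G$. Then $l=|E(G)|-d_G(v)$ and $l'=|E(\bar G)|-d_{\bar G}(v)=|E(\bar G)|-(n-1)+d_G(v)$; eliminating $d_G(v)$ between these and substituting the self-complementarity identity $|E(G)|=|E(\bar G)|=\tfrac{1}{4}n(n-1)$ expresses $l'$ in terms of $n$ and $l$ alone.

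The main obstacle I anticipate is the closing algebra. The identity that falls out most naturally from the degree accounting is
\[
l+l'=\binom{n}{2}-(n-1)=\binom{n-1}{2},
\]
which is just Theorem~\ref{T-WKN} applied to $K_n$ and already accounts for the $n-1$ non-mono-indexed edges incident with $v$. Converting this into the stated closed form $l'=n-l-1$ requires an additional constraint on $d_G(v)$ coming specifically from the self-complementarity $G\cong\bar G$ (not merely the equality of edge counts), and arranging the bookkeeping so that the answer collapses to $n-l-1$ rather than to the generic $\binom{n-1}{2}-l$ is the delicate step of the argument.
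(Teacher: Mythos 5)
Your degree accounting is correct as far as it goes, but you have left the decisive step undone, and in fact it cannot be done: the identity you derive, $l+l'=\binom{n-1}{2}$ (with $l$ and $l'$ counting mono-indexed edges of $G$ and $\bar{G}$), is genuinely incompatible with the claimed formula $l'=n-l-1$ except when $\binom{n-1}{2}=n-1$, i.e.\ $n=4$. No further constraint extracted from $G\cong\bar{G}$ can change this, because your count of $l$ and $l'$ depends only on $d_G(v)$ and $d_{\bar{G}}(v)$, and these are already fully pinned down by $d_G(v)+d_{\bar{G}}(v)=n-1$. Deferring the reconciliation to ``the delicate step of the argument'' is therefore not a postponement but a dead end. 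A concrete check: for $G=\bar{G}=C_5$ the paper's own Proposition~\ref{T-WSCG} gives $l=l'=3$, whereas $n-l-1=1$.

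What your computation actually reveals is that the proposition is true only under the opposite reading: if $l$ denotes the number of edges of $G$ that are \emph{not} mono-indexed, then $l=d_G(v)$ and the number of non-mono-indexed edges of $\bar{G}$ is $d_{\bar{G}}(v)=n-1-d_G(v)=n-l-1$, which is exactly the stated expression. (Note also that self-complementarity is barely used here; only $G\cup\bar{G}=K_n$ and Theorem~\ref{T-WKN} enter.) The paper supplies no proof of this proposition, so there is nothing to compare your route against; but as written your proposal does not prove the statement, and with the literal reading of ``mono-indexed'' the statement is false. You should either correct the statement to refer to non-mono-indexed edges and finish with the one-line degree identity above, or record your identity $l+l'=\binom{n-1}{2}$ as the correct relation between the mono-indexed edge counts.
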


\section{Conclusion}

In this paper, we have discussed the admissibility of certain finite graph operations. More properties and characteristics of weak IASIs, both uniform and non-uniform, are yet to be investigated.  We have formulated some conditions for some graph classes and graph operations to admit weak and strong IASIs. The problems of establishing the necessary and sufficient conditions for various graphs and graph classes to have certain IASIs still remain unsettled. All these facts highlight a wide scope for further studies in this area.

\end{document}